\newtheorem{thm}{Theorem}[section]
\newtheorem{lem}[thm]{Lemma}
\newtheorem{prop}[thm]{Proposition}
\newtheorem{cor}[thm]{Corollary}
\theoremstyle{definition}
\newtheorem{defn}[thm]{Definition}
\newtheorem{rem}[thm]{Remark}
\newtheorem*{ack}{Acknowledgments}
\numberwithin{equation}{section}
\newcommand{\comment}[1]{}
\newcommand{\ve}{\varepsilon}
\newcommand{\bd}{\partial}
\newcommand{\ints}{\mathbb Z}
\newcommand{\rls}{\mathbb R}
\newcommand{\rat}{\mathbb Q}
\newcommand{\bb}[1]{\mathbb{#1}}
\newcommand{\cl}[1]{\mathcal{#1}}
\newcommand{\scr}[1]{\mathscr{#1}}
\newcommand{\wt}[1]{\widetilde{#1}}
\newcommand{\al}[1]{\begin{align*}#1\end{align*}}
\newcommand{\en}[1]{\begin{enumerate}#1\end{enumerate}}
\newcommand{\set}[1]{\left\{#1\right\}}
\newcommand{\setn}[2]{\left\{#1\mid #2\right\}}
\newcommand{\inp}[2]{\left<#1,#2\right>}
\newcommand{\abs}[1]{\left\vert#1\right\vert}
\newcommand{\tb}{\text{tb}}
\newcommand{\rot}{\text{rot}}
\newcommand{\slk}{\text{sl}}
\begin{document}

\title[]{Bennequin type inequalities in lens spaces}
\author[]{Christopher R. Cornwell}
\address[]{Department of Mathematics, Michigan State University, East
Lansing, MI 48824}
\email[]{cornwell@math.msu.edu}

\begin{abstract}
We give criteria for an invariant of lens space links to bound the maximal self-linking number in certain tight contact lens spaces. Our result generalizes that given by Ng \cite{Ng} for links in $S^3$ with the standard tight contact structure. As a corollary we extend the Franks-Williams-Morton inequality to the setting of lens spaces.
\end{abstract}

\maketitle

\section{Introduction}
\label{sec:intro}

Of intrinsic interest to the study of Legendrian and transverse knots are the Thurston-Bennequin number $\tb(K)$ and rotation number $\rot(K)$ in the Legendrian setting, and the self-linking number $\slk(K)$ in the transverse setting. Along with the knot type, they are known as the ``classical invariants'' of Legendrian and transverse knots \cite{B}. Much effort has gone into finding upper bounds for these classical invariants in $(S^3,\xi_{st})$, where $\xi_{st}$ is the standard tight contact structure on the 3-sphere (e.g. \cite{Bnn},\cite{FW,M},\cite{Rud1},\cite{Rud2},\cite{Pla1},\cite{Pla2,Shum},\cite{Ng1},\cite{Wu}). Much of this work benefits from the fact that the classical invariants of Legendrian/transverse knots in $(S^3,\xi_{st})$ can be computed easily from a front projection.

Less is known about these invariants of Legendrian knots in other contact manifolds. A theorem of Eliashberg \cite{E} generalizes the Bennequin inequality for null-homologous Legendrian knots in any 3-manifold with a tight contact structure:

\begin{thm}[Eliashberg-Bennequin inequality] Let $\xi$ be a tight contact structure on a 3-manifold, $Y$. If $K$ is a null-homologous knot in $Y$ and $F$ is a Seifert surface for $K$, then
	\begin{equation}\emph{tb}(K_l)+\abs{\emph{rot}_F(K_l)}\le 2g(F)-1
	\label{eqn:EBineq}
	\end{equation}
for any $K_l$, a Legendrian representative of $K$.
\end{thm}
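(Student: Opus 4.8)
The plan is to reduce the inequality to a combinatorial count on the dividing set of a convex Seifert surface, with tightness entering only through Giroux's criterion.

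First I would upgrade the given data to convex--surface data. Since $K_l$ is Legendrian it has a standard tubular neighborhood, inside which $F$ may be isotoped rel $\bd F$ so that the characteristic foliation of $\xi$ on $F$ is standard near $K_l=\bd F$; then a $C^\infty$-small perturbation of the interior of $F$, fixing $\bd F$, makes the characteristic foliation Morse--Smale, so that $F$ becomes convex with Legendrian boundary. Let $\Gamma=\Gamma_F$ be its dividing set, a properly embedded $1$-manifold with $\bd\Gamma=\Gamma\cap K_l$, and decompose $F\setminus\Gamma=F_+\sqcup F_-$ into positive and negative regions. Two inputs are needed here: (i) the classical invariants are read off from $\Gamma$ via
\[
\tb(K_l)=-\tfrac12\,\#(\Gamma\cap K_l),\qquad \rot_F(K_l)=\chi(F_+)-\chi(F_-)
\]
for a suitable orientation convention; and (ii) Giroux's criterion, which, since $(Y,\xi)$ is tight, says that $\Gamma$ has no closed component bounding a disk in $F$ (and, being the frontier between two nonempty regions, $\Gamma\neq\emptyset$).

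Next comes the bookkeeping. Every arc of $\Gamma$ has both endpoints on $K_l$, so the number $a$ of arcs equals $\tfrac12\#(\Gamma\cap K_l)$ and $\chi(\Gamma)=a$. Inclusion--exclusion applied to $F=F_+\cup_\Gamma F_-$ gives $\chi(F_+)+\chi(F_-)=\chi(F)+a=1-2g(F)+a$. Feeding the identities in (i) into $\tb(K_l)+\abs{\rot_F(K_l)}=-a+\abs{\chi(F_+)-\chi(F_-)}$, and assuming without loss of generality $\chi(F_+)\ge\chi(F_-)$, one checks that the claimed bound $\tb(K_l)+\abs{\rot_F(K_l)}\le 2g(F)-1$ is equivalent to the single inequality $\chi(F_+)\le a$.

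Finally I would prove $\chi(F_\pm)\le a$ using Giroux's criterion. Each component of $F_+$ is a compact surface with nonempty boundary, hence has Euler characteristic at most $1$, with equality exactly for disks; so $\chi(F_+)\le\#\{\text{disk components of }F_+\}$. The single boundary circle of a disk component of $F_+$ cannot be a closed curve of $\Gamma$ (that curve would bound a disk in $F$, contradicting tightness) and cannot be $K_l$ itself (then the disk would be all of $F$, forcing $\Gamma=\emptyset$), so it must contain at least one arc of $\Gamma$. Since each arc of $\Gamma$ borders exactly one region of $F_+$, the number of disk components of $F_+$ is at most $a$, whence $\chi(F_+)\le a$; the case $\chi(F_-)\ge\chi(F_+)$ is identical. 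The main obstacle is the setup in the first paragraph — achieving convexity while keeping the boundary Legendrian with the correct germ of characteristic foliation, together with the standard but delicate verification of the identities for $\tb(K_l)$ and $\rot_F(K_l)$ in terms of $\Gamma_F$; granting those, tightness is used only via Giroux's criterion and the remainder is the Euler-characteristic count above.
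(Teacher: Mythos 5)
You should first be aware that the paper contains no proof of this statement: it is quoted as Eliashberg's theorem with a citation to \cite{E}, so the comparison can only be with the standard arguments in the literature. Your route is the usual convex-surface-theory proof (Kanda's formulas for $\tb$ and $\rot$ from the dividing set, plus Giroux's criterion), which is a different and perfectly legitimate approach from Eliashberg's original analysis of the characteristic foliation. The combinatorial core of your write-up is correct: once $F$ is convex with Legendrian boundary, the identities $\tb(K_l)=-\tfrac12\#(\Gamma\cap K_l)$ and $\rot_F(K_l)=\chi(F_+)-\chi(F_-)$, together with $\chi(F_+)+\chi(F_-)=\chi(F)+a$, do reduce the inequality to $\chi(F_\pm)\le a$, and the disk-component count via Giroux's criterion is sound.

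The genuine gap is in your first paragraph. A compact surface with Legendrian boundary can be perturbed rel boundary to a convex surface only when the twisting of the contact planes along each boundary component relative to the surface framing is non-positive; for a Seifert surface this twisting is exactly $\tb(K_l)$, and your own formula $\tb(K_l)=-\tfrac12\#(\Gamma\cap K_l)\le 0$ shows that convexity rel the Legendrian boundary is outright impossible when $\tb(K_l)>0$. Legendrian knots with positive $\tb$ do exist in tight structures (the right-handed trefoil with $\tb=1$ in the standard tight $S^3$, for instance), so as written your argument simply does not start in those cases, which are among the ones where the bound has real content. The standard repair is to stabilize $K_l$ with the sign of $\rot_F(K_l)$ until $\tb<0$: such stabilizations preserve $\tb+\abs{\rot_F}$, so the bound for the stabilized representative implies it for $K_l$; some such reduction must be stated. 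A smaller point: your parenthetical justification that $\Gamma\neq\emptyset$ (``frontier between two nonempty regions'') is circular --- the nonemptiness of both regions is what needs proof. What you actually need is the standard fact that a convex surface with Legendrian boundary has nonempty dividing set (e.g.\ by the Stokes argument in the vertically invariant neighborhood); without it, the case of $F$ a disk with empty dividing set, i.e.\ a $\tb=0$ unknot, is not excluded, and that is precisely the case in which the claimed inequality would fail.
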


This bound can be improved in some settings. Lisca and Matic improved the bound in the case that the contact structure is Stein fillable \cite{LM}, and this improvement was extended to the setting of a tight contact structure with non-vanishing Seiberg-Witten contact invariant by Mrowka and Rollin \cite{MR}. An analogous theorem was proved by Wu \cite{Wu2} for the Ozsv\'ath-Szab\'o contact invariant. 

These improvements involved replacing the Seifert genus in the Eliashberg-Bennequin inequality with the genus of a surface which is properly embedded in a 4-manifold bounded by $Y$. As such bounds involve the negative Euler characteristic of a surface with boundary $K$, they must be no less than -1. In \cite{H}, Hedden introduced an integer $\tau_{\xi}(K)$ that is defined via the filtration on knot Floer homology associated to $(Y,[F],K)$, where $[F]$ is the homology class of a Seifert surface for $K$. He showed that in the case that the Ozsv\'ath-Szab\'o contact invariant is non-zero, the right side of (\ref{eqn:EBineq}) can be replaced by $2\tau_{\xi}(K)-1$. With such a bound he showed that for any contact manifold with non-zero contact invariant, there exist prime Legendrian knots with arbitrarily negative classical invariants.

In another direction, one could consider rationally null-homologous knots in a contact manifold $(Y,\xi)$. In such a setting there is a notion of rational Seifert surface and corresponding classical invariants $\tb_\rat$, $\rot_\rat$, and $\slk_\rat$ (see Definition \ref{defn:classicalInvts} below). Baker and Etnyre \cite{BE} extend the Eliashberg-Bennequin inequality to this setting:

\begin{thm}
Let $(Y,\xi)$ be a contact 3-manifold with $\xi$ a tight contact structure. Let $K$ be a knot in $Y$ with order $r>0$ in homology and let $\Sigma$ be a rational Seifert surface for $K$. Then for $K_t$, a transverse representative of $K$,
	\[\slk_\rat(K_t)\le-\frac1r\chi(\Sigma).\]
Moreover, if $K_l$ is a Legendrian representative of $K$ then
	\[\tb_\rat(K_l)+\abs{\rot_\rat(K_l)}\le-\frac1r\chi(\Sigma).\]
\end{thm}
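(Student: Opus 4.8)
The plan is to prove the Legendrian inequality directly and then to deduce the transverse one. For the deduction, any transverse representative $K_t$ of $K$ is the positive transverse push-off $K_l^{+}$ of some Legendrian representative $K_l$ (a Legendrian approximation), and one reads off Definition~\ref{defn:classicalInvts}, exactly as in $S^3$, that $\slk_\rat(K_l^{+})=\tb_\rat(K_l)-\rot_\rat(K_l)$; granting the Legendrian bound, $\slk_\rat(K_t)=\tb_\rat(K_l)-\rot_\rat(K_l)\le\tb_\rat(K_l)+\abs{\rot_\rat(K_l)}\le-\tfrac1r\chi(\Sigma)$. So it suffices to prove $\tb_\rat(K_l)+\abs{\rot_\rat(K_l)}\le-\tfrac1r\chi(\Sigma)$ for every Legendrian representative $K_l$.

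For this I would carry out the convex-surface proof of the Eliashberg--Bennequin inequality~\cite{E} inside the knot exterior, normalized by the order $r$. Take a standard tubular neighborhood $N=N(K_l)$ with convex boundary whose dividing set is two parallel curves of slope the contact framing of $K_l$. Realize $\Sigma$ in $Y\setminus\operatorname{int}N$; by the Legendrian realization principle I may assume $\partial\Sigma$ is a union of Legendrian curves on $\partial N$ meeting each meridian $r$ times, and then perturb $\Sigma$ rel $\partial\Sigma$ to be convex with dividing set $\Gamma_\Sigma$ a disjoint union of $a$ properly embedded arcs (each with both endpoints on $\partial\Sigma$) and some closed curves; write $\Sigma_{\pm}$ for the positive and negative regions. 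The relevant bookkeeping is then $r\,\tb_\rat(K_l)=-\tfrac12\#(\partial\Sigma\cap\Gamma_\Sigma)=-a$, and $r\,\rot_\rat(K_l)=\chi(\Sigma_{+})-\chi(\Sigma_{-})$ (the standard relative-Euler-class computation for a convex surface with Legendrian boundary, where the boundary trivialization is $TK_l$), while inclusion--exclusion gives $\chi(\Sigma)=\chi(\Sigma_{+})+\chi(\Sigma_{-})-\chi(\Gamma_\Sigma)=\chi(\Sigma_{+})+\chi(\Sigma_{-})-a$. Assembling these,
\[
r\big(\tb_\rat(K_l)+\abs{\rot_\rat(K_l)}\big)=2\max\{\chi(\Sigma_{+}),\chi(\Sigma_{-})\}-\chi(\Sigma)-2a,
\]
so the theorem reduces to the estimate $\chi(\Sigma_{\pm})\le a$.

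This estimate is where tightness enters and is the real content. Since $\xi$ is tight near the convex surface $\Sigma$, Giroux's criterion forbids any closed component of $\Gamma_\Sigma$ from bounding a disk in $\Sigma$ (in particular $\Gamma_\Sigma$ has no closed component at all when $\Sigma$ is a disk). Thus a disk component of $\Sigma_{+}$ with no dividing arc on its boundary would be bounded either by such a closed curve, which is excluded, or by a component of $\partial\Sigma$ disjoint from $\Gamma_\Sigma$, which would then be a Legendrian unknot of non-negative Thurston--Bennequin invariant and so cannot occur in a tight structure; hence every disk component of $\Sigma_{+}$ has at least one dividing arc on its boundary. As each of the $a$ dividing arcs lies on the boundary of exactly one component of $\Sigma_{+}$, the number of disk components of $\Sigma_{+}$ is at most $a$, while every other component has non-positive Euler characteristic (there are no closed components since $\Sigma$ has boundary), so $\chi(\Sigma_{+})\le a$, and symmetrically $\chi(\Sigma_{-})\le a$. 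Feeding this into the displayed identity yields $\tb_\rat(K_l)+\abs{\rot_\rat(K_l)}\le-\tfrac1r\chi(\Sigma)$, and with the first paragraph this proves the theorem. I expect the main obstacle to be bookkeeping rather than new geometry: one must check that the factor-of-$r$ normalizations of $\tb_\rat$, $\rot_\rat$ and the relative Euler number agree with Definition~\ref{defn:classicalInvts}, verify that the Legendrian realization principle applies to the family of curves $\partial\Sigma$ on $\partial N$, and double-check the low-complexity configurations (most notably $\Sigma$ a disk or annulus), in which the inequality is typically sharp and Giroux's criterion must be applied with some care.
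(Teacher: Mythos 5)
First, a point of orientation: the paper does not prove this theorem at all --- it is imported verbatim from Baker--Etnyre \cite{BE} --- so your proposal is being measured against their argument. Your skeleton (convex surface theory, dividing-arc count, Giroux's criterion to bound $\chi(\Sigma_\pm)$) is the standard modern packaging of the Eliashberg--Bennequin proof, and the final estimate $\chi(\Sigma_\pm)\le a$ is argued correctly. But there is a genuine gap in the step $r\,\tb_\rat(K_l)=-\tfrac12\#(\partial\Sigma\cap\Gamma_\Sigma)=-a$. What convex surface theory actually gives is $\operatorname{tw}(\partial\Sigma,Fr_\Sigma)=-\tfrac12\#(\partial\Sigma\cap\Gamma_\Sigma)$, i.e.\ the twisting of $\xi$ along the Legendrian cable $\gamma=\partial\Sigma^0$ relative to the surface framing, and since $\gamma$ lies on the convex torus $\partial N$ this equals $-\abs{rt-s}$ where $t$ is the dividing slope; meanwhile $r\,\tb_\rat(K_l)=\Sigma\cdot K'=\pm(rt-s)$. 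These agree only when $\tb_\rat(K_l)\le 0$. When $\tb_\rat(K_l)>0$ your identity reads $r\,\tb_\rat=-\abs{r\,\tb_\rat}$, which is false, and the argument collapses exactly for the Legendrian representatives with large positive $\tb_\rat$ that the theorem is supposed to exclude (already visible for a hypothetical $\tb=+1$ unknot: the convex disk has one dividing arc, $a=1$, yet $\tb=+1$). A secondary, smaller issue: the relative Euler class computation yields the winding of $T\gamma$, whereas $r\,\rot_\rat(K_l)$ is by Definition \ref{defn:classicalInvts} the winding of $j^*dK_l$ along the $r$-fold cover; the discrepancy is the winding of $T\gamma$ against $TK_l$ across the cone, which is not obviously zero and needs its own lemma rather than a parenthetical.

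The fix is to reverse your order of deduction, which is what Baker--Etnyre do: prove the \emph{transverse} inequality first by Eliashberg's characteristic-foliation argument on $\Sigma$ (make the foliation generic, use the elimination lemma and tightness to cancel negative elliptic points, and compare the signed singularity counts computing $r\,\slk_\rat$ and $\chi(\Sigma)$ --- no sign ambiguity of the above kind arises there), and then deduce the Legendrian statement from Lemma \ref{lem:slkTOtb-rot}, since $\tb_\rat(K_l)+\abs{\rot_\rat(K_l)}=\max\bigl(\slk_\rat(T_+(K_l)),\slk_\rat(T_-(K_l))\bigr)$. Your first paragraph runs this implication in the unprofitable direction: deducing the transverse bound from the Legendrian one is fine as logic, but it forces you to prove the Legendrian bound for \emph{all} representatives, including those with $\tb_\rat>0$, which is precisely where the convex-surface bookkeeping breaks.
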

	
There is an inequality found by Franks and Williams \cite{FW}, and independently by Morton \cite{M}, that relates the index and algebraic crossing number of a braid to a degree of the HOMFLY polynomial of its closure. Later, using the work of Bennequin, Fuchs and Tabachnikov reinterpreted the result in terms of the self-linking number of a transverse knot in $(S^3,\xi_{st})$ \cite{FT}. This inequality has come to be known as the Franks-Williams-Morton inequality.

More precisely, we describe the Franks-Williams-Morton (FWM) inequality as follows. The HOMFLY polynomial $J(K)$ is a polynomial invariant of links in the variables $v,z$ such that if $U\subset S^3$ is the unknot then $J(U)=1$, and $J$ satisfies
	\begin{equation}
	v^{-1}J(K_+)-vJ(K_-)=zJ(K_0),
	\label{eqn:skeinRel'n}
	\end{equation}
where $K_+, K_-,$ and $K_0$ differ only in a small neighborhood as below.

	\[\begin{tikzpicture}[>=stealth]
		\draw[->]
			(1,0)--(0.55,0.45)
			(0.45,0.55)--(0,1);
		\draw[->]
			(0.5,-0.2) node {$K_+$}
			(0,0)--(1,1);
		\draw[->]
			(3,0)--(3.45,0.45)
			(3.55,0.55)--(4,1);
		\draw[->]
			(3.5,-0.2) node {$K_-$}
			(4,0)--(3,1);
		\draw[->]
			(6,0) .. controls (6.5,0.5) .. (6,1);
		\draw[->]
			(6.5,-0.2) node {$K_0$}
			(7,0) .. controls (6.5,0.5) .. (7,1);
	\end{tikzpicture}\]

\begin{thm}[Franks-Williams-Morton inequality] Let $e(K)$ denote the minimum degree of $v$ in $J(K)$. Then for any transverse representative $K_t$ of $K$,
	\[\slk(K_t)\le e(K)-1.\]
Moreover, if $K_l$ is a Legendrian representative of $K$ then
	\[\tb(K_l)+\abs{\rot(K_l)}\le e(K)-1.\]
\label{thm:FWMineq}
\end{thm}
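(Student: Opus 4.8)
The plan is to reduce the theorem to a combinatorial inequality for the HOMFLY polynomial of a braid closure and then prove that inequality by induction on crossing number. First, one reduces the Legendrian statement to the transverse one: if $K_l$ is a Legendrian representative of $K$, its positive and negative transverse push-offs $K_l^{+},K_l^{-}$ are transverse representatives of $K$ with $\slk(K_l^{\pm})=\tb(K_l)\mp\rot(K_l)$, so $\tb(K_l)+\abs{\rot(K_l)}=\max\{\slk(K_l^{+}),\slk(K_l^{-})\}$ and the Legendrian inequality follows from the transverse one. For the transverse statement, recall (Bennequin) that every transverse knot in $(S^3,\xi_{st})$ is transversely isotopic to the closure $\hat\beta$ of a braid $\beta$ on some number $n$ of strands, and that $\slk(\hat\beta)=w(\beta)-n$ where $w(\beta)$ is the exponent sum of $\beta$; thus it suffices to prove $e(K)\ge w(\beta)-n+1$.

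This is deduced from the Morton--Franks--Williams diagram inequality: for any oriented link diagram $D$, with writhe $w(D)$ and with $s(D)$ Seifert circles, the minimum $v$-degree $e(D)$ of $J(D)$ satisfies $e(D)\ge w(D)-s(D)+1$ (and, dually, the maximum $v$-degree is at most $w(D)+s(D)-1$). Applied to a braid diagram $\hat\beta$, whose Seifert circles are exactly the $n$ strands, this gives $s(\hat\beta)=n$ and the required estimate. To prove the diagram inequality it is convenient to use the regular-isotopy normalization $\bar J(D)=v^{-w(D)}J(D)$; a direct check from the skein relation shows that switching one crossing of $D$ and resolving it compatibly with orientation yields diagrams $D'$ and $D_0$ with
\[ \bar J(D)=\bar J(D')\pm z\,\bar J(D_0), \]
and that $\bar J$ is unchanged by Reidemeister moves II and III. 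The claim becomes that every $v$-exponent appearing in $\bar J(D)$ has absolute value at most $s(D)-1$, which I would prove by induction on the crossing number of $D$ together with a secondary induction --- on the number of crossings obstructing $D$ from being ascending with respect to a fixed ordering of components and choice of basepoints --- to handle the fact that $D'$ has the same crossing number as $D$. The base case is the crossingless one, $\bar J(D)=\big((v^{-1}-v)/z\big)^{k-1}$ for $k$ disjoint circles, and the ascending case reduces to an unlink, where one checks $s(D)\ge k+\abs{w(D)}$ directly.

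The main obstacle is this induction. Switching a crossing fixes the crossing number, and --- more seriously --- the resolved diagram $D_0$ may have \emph{more} Seifert circles than $D$, precisely when the chosen crossing joins a Seifert circle to itself; then the inductive hypothesis only puts the $v$-exponents of $\bar J(D_0)$ in a range slightly too wide. The ordering/basepoint bookkeeping and the reduction to ascending diagrams are exactly what keep these two effects under control, so this is where the real work lies; everything after the diagram inequality (Bennequin's formula and the push-off identity) is formal. An alternative to the skein induction is the original argument via the Ocneanu trace on the Hecke algebra $H_n$, in which the same $v$-degree estimate is extracted from the structure of $H_n$ over its subalgebra $H_{n-1}$; either route isolates the same essential point.
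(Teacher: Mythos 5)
Your outline is the classical proof, and it is sound, but it is a genuinely different route from the one this paper supports. The paper does not reprove Theorem \ref{thm:FWMineq} directly; it quotes it from \cite{FW,M,FT}, and its own machinery recovers it only as the $p=1$ case of Corollary \ref{BennBd}. That argument, following Ng \cite{Ng}, is an axiomatic skein criterion: set $i(L)=(1-e(L))/p$, verify the two skein inequalities of Theorem \ref{mainThm2} formally from the HOMFLY relation, check the bound on trivial links (unlinks when $p=1$) using the grid-projection formula of Corollary \ref{slForm}, and then propagate it to all links via the grid-diagram skein tree (Lemma \ref{lemRuth}), with Lemma \ref{lem:passingThru} playing the role of your crossing-switch step. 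In particular the paper's route never passes to braid closures, never counts Seifert circles, and does not invoke Bennequin's transverse braiding theorem; what it buys is an argument that generalizes verbatim to $(L(p,q),\xi_{UT})$, which is the point of the paper. Your route --- push-offs to reduce the Legendrian statement to the transverse one, Bennequin's $\slk(\hat\beta)=w(\beta)-n$, and then the Morton--Franks--Williams diagram inequality $e(D)\ge w(D)-s(D)+1$ proved by skein induction (or extracted from the Ocneanu trace) --- is the original proof and is perfectly legitimate for the $S^3$ statement, though it does not obviously export to lens spaces.

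Two corrections on where you locate the difficulty. First, the case you fear in the induction cannot occur: the oriented resolution $D_0$ has exactly the same Seifert circles as $D$, since Seifert's algorithm smooths every crossing coherently with orientation and smoothing one crossing first changes nothing (relatedly, no crossing joins a Seifert circle to itself: smoothing all other crossings leaves the two strands at that crossing on a single component, by a mod-$2$ intersection argument, so the final smoothing splits it into two distinct circles). Hence $s(D_0)=s(D)$ and the inductive bound for $D_0$ is exactly the one you need. Second, the step you dismiss as a direct check --- that an ascending diagram $D$ of a $k$-component unlink satisfies $s(D)\ge k+\abs{w(D)}$ --- is the real crux: for an arbitrary unlink diagram this inequality is precisely the MFW degree bound for that diagram (given $J=\left((v^{-1}-v)/z\right)^{k-1}$), so a ``direct check'' that ignores ascendingness would be circular. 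The verification must genuinely use the ascending structure (for instance, inter-component crossings contribute zero to $w(D)$ because linking numbers vanish, but the self-crossing contribution still needs its own inductive argument), and this is where the published proofs spend their effort. So your plan is right, but the hard work sits in that base case rather than in the resolution step.
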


In this paper we give criteria for a $\rat$-valued invariant of links in $L(p,q)$ to bound the classical invariants in $(L(p,q),\xi_{UT})$, where $\xi_{UT}$ is a universally tight contact structure on $L(p,q)$ defined by the pushforward of $\xi_{st}$. Our result is a lens space analogue of a theorem of Lenny Ng \cite{Ng}. Necessary to the theorem is the definition of a collection of links in $L(p,q)$, called \emph{trivial links}, which has one representative in each homotopy class of links. These trivial links were defined explicitly in \cite{C} via grid diagrams that correspond to toroidal front projections in the contact lens space mentioned above (see \cite{BG}). Moreover, we note that in \cite{C} a skein theory was developed, producing a finite length skein tree for any link in $L(p,q)$ with the trivial links as leaves in the tree. 

To a given trivial link $\tau$ let $T(\tau)$ be the transverse representative of $\tau$ defined in Remark \ref{rem:T(K)}. Our main theorem is as follows.

\begin{thm} Let $i$ be a $\rat$-valued invariant of oriented links in $L(p,q)$ such that
	\al{
	&i(L_+)+1\le\max\left(i(L_-)-1,i(L_0)\right)\\
	and\\
	&i(L_-)-1\le\max\left(i(L_+)+1,i(L_0)\right),
	}
where $L_+, L_-,$ and $L_0$ are oriented links that differ as in the skein relation. If $\emph{sl}_\rat(T(\tau))\le-i(\tau)$ for every trivial link $\tau$ in $L(p,q)$, then
		\[\overline{\emph{sl}}_\rat(L)\le-i(L)\]
	 for every link $L$ in $L(p,q)$. Here $\overline{\emph{sl}}_\rat(L)$ is the maximum rational self-linking number among transverse links in $(L(p,q),\xi_{UT})$ that are isotopic to $L$.
\label{mainThm2}
\end{thm}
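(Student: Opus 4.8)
The plan is to compute $\slk_\rat$ from grid diagrams and to run the skein tree of \cite{C} as an induction. First I would reduce the theorem to a statement about grid diagrams. By the correspondence between toroidal front projections (equivalently, grid diagrams) and Legendrian links in $(L(p,q),\xi_{UT})$ of \cite{BG}, together with passage to transverse push-offs, every transverse link in $(L(p,q),\xi_{UT})$ is transversely isotopic to the transverse link $T_G$ associated to some grid diagram $G$; since $\slk_\rat$ is a transverse isotopy invariant, it suffices to prove $\slk_\rat(T_G)\le -i(L)$ for every grid diagram $G$ representing the oriented link $L$, after which $\overline{\slk}_\rat(L)\le -i(L)$ follows because every transverse representative of $L$ arises in this way.

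Next I would induct on the height of the skein tree rooted at $G$ provided by \cite{C}: its internal vertices resolve a chosen crossing of a grid into the crossing-change grid and the oriented-smoothing grid, and its leaves are exactly the trivial grids, i.e.\ the grids representing the trivial links $\tau$. In the base case $G$ is a trivial grid, so $L=\tau$ and $T_G=T(\tau)$ (Remark \ref{rem:T(K)}), and the hypothesis $\slk_\rat(T(\tau))\le -i(\tau)$ is precisely the desired bound. For the inductive step, consider the crossing resolved at the root. If it is positive, write $L=L_+$ and let $G_-$, $G_0$ be the child grids, representing $L_-$, $L_0$. The key local fact is that the grid formula for $\slk_\rat$ changes by $-2$ under the crossing change and by $-1$ under the oriented smoothing, so that
	\[\slk_\rat(T_G)=\slk_\rat(T_{G_-})+2=\slk_\rat(T_{G_0})+1.\]
Applying the inductive hypothesis to $G_-$ and $G_0$ gives $\slk_\rat(T_G)\le\min(-i(L_-)+2,\,-i(L_0)+1)$, while the first hypothesis on $i$ rearranges to $-i(L_+)\ge\min(-i(L_-)+2,\,-i(L_0)+1)$; hence $\slk_\rat(T_G)\le -i(L)$. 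If the crossing is negative, write $L=L_-$ and argue with the child grids $G_+$, $G_0$: now $\slk_\rat(T_G)=\slk_\rat(T_{G_+})-2=\slk_\rat(T_{G_0})-1$, so $\slk_\rat(T_G)\le\min(-i(L_+)-2,\,-i(L_0)-1)$, and the second hypothesis on $i$ rearranges to $-i(L_-)\ge\min(-i(L_+)-2,\,-i(L_0)-1)$, again giving $\slk_\rat(T_G)\le -i(L)$. This closes the induction, and the reduction above completes the proof.

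I expect the main obstacle to be the key local fact: that $\slk_\rat$ of the transverse link attached to a grid diagram changes by exactly $-2$ under a crossing change and by exactly $-1$ under an oriented smoothing (and by $+2$, $+1$ at a negative crossing). In $S^3$ this is transparent, since there a transverse link is a closed braid with $\slk=w-n$ and the two resolutions shift the writhe $w$ by $\mp 2$ and $\mp 1$ while fixing the braid index $n$. In $(L(p,q),\xi_{UT})$ one instead has an explicit grid-diagram formula for $\slk_\rat(T_G)$ from \cite{C} (building on \cite{BG}), and one must check that it splits as a genuine writhe contribution plus terms determined only by the winding numbers and the homology class of $L$, and that each of the two skein resolutions is realized by a grid modification affecting only the writhe contribution, by the stated amount, including the case in which the oriented smoothing changes the number of components. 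A more routine point is to confirm that the skein tree of \cite{C} may be taken to consist of grid diagrams with the trivial grids as its leaves, so that the inductive hypothesis applies to the literal children $G_-,G_0$ (resp.\ $G_+,G_0$) and the base case meets the hypothesis $\slk_\rat(T(\tau))\le -i(\tau)$ on the nose.
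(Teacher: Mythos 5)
Your overall strategy is the same as the paper's: compute $\slk_\rat$ from a grid projection, and induct through the skein theory of \cite{C}, using the two hypotheses on $i$ exactly as you do (your arithmetic at the skein crossing, and the observation that $\mu$, $\lambda$ and the cusp count are unchanged while only the writhe shifts by $\mp2$, $\mp1$, is precisely the content of the paper's Lemma \ref{lem:passingThru} combined with Corollary \ref{slForm}). However, there is a genuine gap in the structure of your induction. You assume a skein tree whose edges are only crossing changes and resolutions at a skein crossing of the literal grid, and whose leaves are the standard trivial grids $D(\cl I)$; no such tree exists in general. Crossing changes and resolutions alone never reduce grid number, so they can at best carry an arbitrary grid to a grid that \emph{represents} a trivial link, not to the distinguished diagram $D(\cl I)$. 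At such a leaf your base case fails: the hypothesis bounds $\slk_\rat$ only for the particular representative $T(\tau)$ coming from $D(\cl I)$, not for an arbitrary transverse representative of $\tau$ (indeed, that $T(\tau)$ maximizes $\slk_\rat$ is Corollary \ref{TrivmaxSL}, a \emph{consequence} of the theorem, so you cannot invoke it). Reaching the actual trivial grids requires the reduction of Lemma \ref{lemRuth}, whose sequence interleaves Legendrian isotopies (commutations, disk slides) and destabilizations, including destabilizations of types X:NE and O:SW which are \emph{not} Legendrian isotopies and do change the transverse representative. The paper must, and does, prove a separate monotonicity statement (its Claim 1) that such destabilizations never decrease the quantity $-\slk_\rat(T_+(P))-i(P)$ along the sequence; your proposal never addresses these moves, and the point you label ``routine'' is exactly where this missing argument lives.

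Two smaller remarks. First, your base-case claim that $T_G=T(\tau)$ ``on the nose'' is not quite right, since $T(\tau)$ is whichever of $T_\pm$ has larger self-linking; this is harmless because $\slk_\rat(T_\pm(\tau))\le\slk_\rat(T(\tau))\le -i(\tau)$, and the paper says so explicitly. Second, your inductive step invokes the inductive hypothesis at \emph{both} children, including the crossing-change child, so you also need a well-founded measure that decreases there; the complexity $\psi$ of \cite{C} is only known to decrease under resolution. The paper sidesteps this with a dichotomy: along the reduction sequence either the quantity never increases (so the bound at the trivial end propagates back), or the first increase occurs at a skein crossing change, in which case the skein hypotheses force the resolution not to increase it and induction on $\psi$ applies only to the resolution child. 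If you repair your argument, you will either need this dichotomy or a careful justification (from \cite{C}) that the full tree, with the isotopy/destabilization edges included, is finite and that the quantity behaves correctly along every edge type.
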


In order to prove the theorem we provide explicit formulae to calculate the invariants $\tb_\rat, \rot_\rat,$ and $\slk_\rat$ from a projection of the link to a Heegaard torus. These formulae are in the spirit of those used to compute the classical invariants in $(S^3,\xi_{st})$ from a front projection (see \cite{B}). Moreover, combining the formula for $\tb_\rat$ with a result of Baker and Grigsby \cite{BG}, we find a very short proof of a well-known result of Fintushel and Stern \cite{FS}, that if integral surgery on a knot in $L(p,q)$ yields $S^3$, then $\pm q$ is a quadratic residue mod $p$.

In \cite{KL}, Kalfagianni and Lin gave a power series invariant of oriented links in a large family of rational homology 3-spheres which satisfies the HOMFLY skein relation (\ref{eqn:skeinRel'n}). The work of the author \cite{C} shows this power series to coverge in a lens space $L(p,q)$ to a Laurent polynomial, providing a HOMFLY polynomial $J_{p,q}$ for links in $L(p,q)$. The polynomial $J_{p,q}(K)$ is a two-variable polynomial in variables $a$ and $z$, and its definition depends on a normalization on the set of trivial links. As a corollary of Theorem \ref{mainThm2} we are able to extend the FWM inequality to the setting of links in $(L(p,q),\xi_{UT})$. A second corollary then tells us the maximal self-linking number of any trivial link and the maximal Thurston-Bennequin number of a trivial knot, using the formulae derived in Section \ref{sec:gridProjForms}.

\begin{cor}
Let $J_{p,q}$ denote the HOMFLY polynomial invariant in $L(p,q)$, normalized so that if $\tau$ is a trivial link with no nullhomotopic components, or is the unknot, then $J_{p,q}(\tau)=a^{p\cdot\emph{sl}_\rat(T(\tau))+1}$. Given an oriented link $L$ with transverse representative $L_t$ in $(L(p,q),\xi_{UT})$, set $e(L)$ to be the minimum degree in $a$ of $J_{p,q}(L)$. Then
	\[\emph{sl}_\rat(L_t)\le\frac{e(L)-1}p.\]
\label{BennBd}
\end{cor}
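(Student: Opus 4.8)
The plan is to obtain Corollary~\ref{BennBd} as a direct application of Theorem~\ref{mainThm2} to the candidate invariant
\[
i(L) \;:=\; \frac{1-e(L)}{p}.
\]
Since $J_{p,q}(L)$ is a Laurent polynomial in $a$ and $z$ with integer exponents, $e(L)\in\ints$, so $i$ is a well-defined $\rat$-valued (in fact $\tfrac1p\ints$-valued) invariant of oriented links in $L(p,q)$. If $i$ satisfies the two skein hypotheses of Theorem~\ref{mainThm2} and if $\slk_\rat(T(\tau))\le -i(\tau)$ for every trivial link $\tau$, then that theorem gives $\overline{\slk}_\rat(L)\le -i(L)=\tfrac{e(L)-1}{p}$; since every transverse representative $L_t$ of $L$ satisfies $\slk_\rat(L_t)\le\overline{\slk}_\rat(L)$, the corollary follows. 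So the whole proof reduces to checking those three hypotheses for this $i$.

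First I would verify the skein inequalities. The polynomial $J_{p,q}$ satisfies the HOMFLY skein relation of \cite{C}, which is (\ref{eqn:skeinRel'n}) with the variable $v$ specialized to $a^p$, namely $a^{-p}J_{p,q}(L_+)-a^{p}J_{p,q}(L_-)=z\,J_{p,q}(L_0)$. Multiplication by a power of $z$ leaves the minimal $a$-degree unchanged, multiplication by $a^{\pm p}$ shifts it by $\pm p$, and the minimal $a$-degree of a sum or difference of Laurent polynomials is at least the minimum of the minimal $a$-degrees of the terms. Solving this relation for $J_{p,q}(L_+)$ gives $e(L_+)\ge\min\bigl(e(L_-)+2p,\ e(L_0)+p\bigr)$ and solving it for $J_{p,q}(L_-)$ gives $e(L_-)\ge\min\bigl(e(L_+)-2p,\ e(L_0)-p\bigr)$; substituting $e(L)=1-p\,i(L)$ and dividing by $p$ turns these into precisely the two inequalities required of $i$ in Theorem~\ref{mainThm2} (under $e=1-p\,i$, the ``$\max$'' on the right of those hypotheses corresponds to the ``$\min$'' appearing in these degree estimates).

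Next I would check the condition on trivial links. If $\tau$ has no nullhomotopic components, or is the unknot, the normalization is chosen so that $J_{p,q}(\tau)=a^{\,p\cdot\slk_\rat(T(\tau))+1}$, hence $e(\tau)=p\cdot\slk_\rat(T(\tau))+1$ and $-i(\tau)=\slk_\rat(T(\tau))$: equality holds. If $\tau$ is a trivial link with $k\ge1$ nullhomotopic components, I would write $\tau=\tau'\sqcup U^{(1)}\sqcup\cdots\sqcup U^{(k)}$ with $\tau'$ a trivial link having no nullhomotopic components and each $U^{(j)}$ a split nullhomotopic unknot. Adjoining a split nullhomotopic unknot multiplies $J_{p,q}$ by the standard HOMFLY split-union factor $\tfrac{v^{-1}-v}{z}$ with $v=a^p$, i.e. by $\tfrac{a^{-p}-a^{p}}{z}$, whose minimal $a$-degree is $-p$, so $e(\tau)=e(\tau')-kp$. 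Meanwhile $T(\tau)$ is, by the construction in Remark~\ref{rem:T(K)}, the split union of $T(\tau')$ with transverse unknots, and self-linking is additive under split union, so $\slk_\rat(T(\tau))=\slk_\rat(T(\tau'))+\sum_{j}\slk_\rat(T(U^{(j)}))$; each $T(U^{(j)})$ is a transverse unknot bounding a disk, so $\slk_\rat(T(U^{(j)}))\le -1$ by the Eliashberg--Bennequin inequality. Combining this with $\slk_\rat(T(\tau'))=\tfrac{e(\tau')-1}{p}$ from the previous case,
\[
\slk_\rat(T(\tau))\;\le\;\frac{e(\tau')-1}{p}-k\;=\;\frac{e(\tau)-1}{p}\;=\;-i(\tau).
\]

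None of these steps is deep; the part that needs care is the bookkeeping — invoking the precise form of the HOMFLY skein relation satisfied by $J_{p,q}$ and of the split-unknot factor in the normalization of \cite{C}, and tracking the monomial powers of $a$ that this normalization introduces, so that the substitution $e(L)=1-p\,i(L)$ converts the degree estimates cleanly into the hypotheses of Theorem~\ref{mainThm2}. With those in hand, Theorem~\ref{mainThm2} delivers $\slk_\rat(L_t)\le\tfrac{e(L)-1}{p}$.
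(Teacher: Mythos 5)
Your proposal is correct and follows essentially the same route as the paper: set $i(L)=\frac{1-e(L)}{p}$, extract the two $\min$/$\max$ degree estimates from the skein relation $a^{-p}J_{p,q}(L_+)-a^pJ_{p,q}(L_-)=zJ_{p,q}(L_0)$, check the trivial-link hypothesis from the normalization, and invoke Theorem \ref{mainThm2}. The only difference is that you spell out the case of trivial links with nullhomotopic components via the split-union factor $\frac{a^{-p}-a^p}{z}$ and the Bennequin bound for unknot components, a detail the paper's proof subsumes in the assertion $e(\tau)=p\cdot\slk_\rat(T(\tau))+1$ for all trivial links.
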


\begin{cor}
If $\tau$ is a trivial link in $L(p,q)$, then $T(\tau)$ has maximal self-linking number among all transverse representatives of $\tau$. If $\tau$ is a trivial knot, then the Legendrian knot associated to its grid number one diagram has maximal Thurston-Bennequin number.
\label{TrivmaxSL}
\end{cor}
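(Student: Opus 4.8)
The plan is to derive both assertions from the Franks--Williams--Morton bound of Corollary~\ref{BennBd} together with the explicit formulae of Section~\ref{sec:gridProjForms}, dealing with the self-linking statement first. Suppose first that the trivial link $\tau$ has no nullhomotopic components, or is the unknot. Then, by the chosen normalization, $J_{p,q}(\tau)=a^{p\cdot\slk_\rat(T(\tau))+1}$ is a single monomial, so $e(\tau)=p\cdot\slk_\rat(T(\tau))+1$, and Corollary~\ref{BennBd} yields
\[
	\slk_\rat(L_t)\le\frac{e(\tau)-1}{p}=\slk_\rat(T(\tau))
\]
for every transverse representative $L_t$ of $\tau$. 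Since $T(\tau)$ is one such representative, it realizes the maximum, i.e.\ $\overline{\slk}_\rat(\tau)=\slk_\rat(T(\tau))$.

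For a general trivial link I would write $\tau=\tau'\sqcup mU$, where $mU$ is the split union of the $m$ nullhomotopic components of $\tau$ (which by construction are unknotted and split) and $\tau'$ is the trivial link obtained by deleting them, so that $T(\tau)=T(\tau')\sqcup mU$ with the latter $m$ factors standard transverse unknots. Given a transverse representative $L_t$ of $\tau$, the $m$ components matching $mU$ are transverse unknots, each with $\slk_\rat\le-1$ by the Bennequin inequality; the homological linking numbers between these and the remaining components vanish; and the remaining components form a transverse sublink isotopic to $\tau'$. Decomposing $\slk_\rat$ accordingly and invoking the previous paragraph gives $\slk_\rat(L_t)\le\overline{\slk}_\rat(\tau')-m=\slk_\rat(T(\tau'))-m=\slk_\rat(T(\tau))$, again attained by $T(\tau)$, which proves the first claim.

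For the Thurston--Bennequin statement, let $\tau$ be a trivial knot and $\Lambda_\tau$ the Legendrian knot coming from its grid number one diagram, so that $T(\tau)$ is a transverse pushoff of $\Lambda_\tau$ and hence $\slk_\rat(T(\tau))=\tb_\rat(\Lambda_\tau)-\rot_\rat(\Lambda_\tau)$. Evaluating the rotation-number formula of Section~\ref{sec:gridProjForms} on a grid number one diagram gives $\rot_\rat(\Lambda_\tau)=0$ (consistent with the fact that such a diagram admits an orientation-reversing symmetry), so $\tb_\rat(\Lambda_\tau)=\slk_\rat(T(\tau))=\overline{\slk}_\rat(\tau)$ by the first part. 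On the other hand, for any Legendrian representative $\Lambda$ of $\tau$ the two transverse pushoffs have rational self-linking numbers $\tb_\rat(\Lambda)\mp\rot_\rat(\Lambda)$, both at most $\overline{\slk}_\rat(\tau)$; hence $\tb_\rat(\Lambda)\le\tb_\rat(\Lambda)+\abs{\rot_\rat(\Lambda)}\le\overline{\slk}_\rat(\tau)=\tb_\rat(\Lambda_\tau)$, so $\Lambda_\tau$ attains the maximal Thurston--Bennequin number.

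The main obstacle is the single genuinely computational step: checking, from the formulae of Section~\ref{sec:gridProjForms}, that the grid number one Legendrian has vanishing rational rotation number (equivalently, that $\tb_\rat$ of the grid number one diagram equals $\slk_\rat(T(\tau))$). This amounts to pinning down precisely how the toroidal front of a grid number one diagram sits relative to the rational Seifert surface used to define $\rot_\rat$; everything else is bookkeeping with the skein-theoretic bound and with the standard behavior of the classical invariants under split union and transverse pushoff.
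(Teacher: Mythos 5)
Your first assertion is handled essentially as in the paper: Corollary \ref{BennBd} plus the normalization $e(\tau)=p\cdot\slk_\rat(T(\tau))+1$ immediately gives $\slk_\rat(\tau_t)\le\frac{e(\tau)-1}{p}=\slk_\rat(T(\tau))$ for every transverse representative. (The paper asserts this identity for \emph{all} trivial links, the case of nullhomotopic components following from property (iii) of $J_{p,q}$ and additivity of $\slk_\rat$ under adding a split local unknot, which is the same bookkeeping you do by hand; that part is fine.)

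The second assertion contains a genuine gap: the claim that the grid number one Legendrian $\Lambda_\tau$ has $\rot_\rat(\Lambda_\tau)=0$ is false in general. For a grid number one projection chosen with $0<\mu<p$, $0<\lambda<p$ there are no cusps, so Proposition \ref{rotForm} gives $\rot_\rat(\Lambda_\tau)=\frac{\mu-\lambda}{p}$; since $\lambda\equiv\mu q \pmod p$, this vanishes only when $\mu(q-1)\equiv0\pmod p$. For instance in $L(5,2)$ a grid number one knot with $\mu=1$ has $\lambda=2$ and $\rot_\rat=-\tfrac15$. (The orientation-reversing-symmetry heuristic does not help: reversing orientation changes the homotopy class when $2\mu\not\equiv0$, so it gives no constraint.) With $\rot_\rat(\Lambda_\tau)\neq0$ your chain of inequalities only yields $\tb_\rat(\Lambda)\le\overline{\slk}_\rat(\tau)=\tb_\rat(\Lambda_\tau)+\abs{\rot_\rat(\Lambda_\tau)}$, which is strictly weaker than maximality of $\tb_\rat(\Lambda_\tau)$. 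The paper closes exactly this gap with two ingredients you are missing: (1) only the estimate $\abs{\rot_\rat(\Lambda_\tau)}<1$ (not vanishing) is extracted from the no-cusp projection, and (2) the fact from Baker--Etnyre that any two Legendrian representatives of the same knot type become Legendrian isotopic after stabilizations, so $\tb_\rat(\Lambda)-\tb_\rat(\Lambda_\tau)\in\ints$. Then a hypothetical $\tb_\rat(\Lambda_\tau)<\tb_\rat(\Lambda)$ forces $\tb_\rat(\Lambda_\tau)+1\le\tb_\rat(\Lambda)$, whence $\slk_\rat(T(\tau))=\tb_\rat(\Lambda_\tau)+\abs{\rot_\rat(\Lambda_\tau)}<\tb_\rat(\Lambda_\tau)+1\le\tb_\rat(\Lambda)\le\slk_\rat$ of one of the transverse pushoffs of $\Lambda$, contradicting the first part of the corollary. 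You should replace your vanishing-rotation step by this integrality argument (or restrict the vanishing claim to the cases where $\mu(q-1)\equiv0\pmod p$, which does not cover the general statement).
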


The paper is organized as follows: In Section \ref{sec:Prelim} we review the construction of toroidal grid diagrams in a lens space made originally in \cite{BG}, \cite{BGH}. We also review results from \cite{C} that provide the lens space HOMFLY polynomial and develop a skein theory on grid diagrams. Then we review constructions of \cite{BE},\cite{BG} that extend classical invariants of Legendrian and transverse knots to the rationally null-homologous setting. In Section \ref{sec:gridProjForms} we give formulas for the Thurston-Bennequin, rotation, and self-linking numbers. In Section \ref{sec:BennBd} we prove our main results, Theorem \ref{mainThm2} and Corollaries \ref{BennBd} and \ref{TrivmaxSL}. Finally Section \ref{sec:comput} gives a sequence of Legendrian knots and links in $(L(5,1),\xi_{UT})$ on which the FWM inequality is sharp and arbitrarily negative.

\begin{ack}The author would like to thank Eli Grigsby, Matt Hedden, and Lenny Ng for their help and input. He thanks his advisor Effie Kalfagianni for introducing him to these problems, for her expertise, and for many helpful discussions. He also thanks the referee for very useful suggestions. This research was supported in part by NSF--RTG grant DMS-0353717, NSF grant DMS-0805942, and by a Herbert T. Graham scholarship.
\end{ack}

\section{Preliminaries}
\label{sec:Prelim}
\subsection{Legendrian links and grid diagrams in $L(p,q)$}
\label{subsec:lensGrids}

Let $L(p,q)$ be the lens space obtained as a quotient of $S^3\subset\bb C^2$ by the equivalence relation $(u_1,u_2)\sim(\omega_pu_1,\omega_p^qu_2)$, where $\omega_p=e^{\frac{2\pi i}p}$. Let $\pi:S^3\to L(p,q)$ be the quotient map.

Represent points $(u_1,u_2)$ of $S^3$ in polar coordinates, letting $u_i=(r_i, \theta_i)$. The kernel $\xi_{st}$ of the 1-form $\alpha=r_1^2d\theta_1+r_2^2d\theta_2$ is the unique (up to orientation) tight contact structure on $S^3$~\cite{Ge}.
The 1-form $\alpha$ is constant along any torus in $S^3$ determined by a fixed $r_1$. Since such a torus is fixed (not pointwise) under the action $(u_1,u_2)\mapsto (\omega_pu_1,\omega_p^qu_2)$, the pushforward $\xi_{UT}=\pi_*(\xi_{st})$ is a well-defined tight contact structure on $L(p,q)$.

The points of $L(p,q)$ can be identified with points in a fundamental domain of the cyclic action on $S^3$. Thus, since $r_2$ is determined by $r_1$ in $S^3$, we can describe $L(p,q)$ by
	\[L(p,q)=\setn{(r_1,\theta_1,\theta_2)}{r_1\in[0,1],\ \theta_1\in\left[0,2\pi\right),\ \theta_2\in\left[0,\frac{2\pi}p\right)}.\]

Analogous to the correspondence between planar grid diagrams and Legendrian links in $(S^3,\xi_{st})$ (\cite{NT},\cite{Rud3}), we can define toroidal grid diagrams in $L(p,q)$ to get a correspondence between grid diagrams and Legendrian links in $(L(p,q),\xi_{UT})$. To be precise we define grid diagrams in $L(p,q)$ as follows.

\begin{defn} A \emph{grid diagram} $D$ with grid number $n$ in $L(p,q)$ is a set of data $(T,\vec\alpha, \vec\beta, \vec{\bb O},\vec{\bb X})$, where:
	\begin{itemize}
		\item $T$ is the oriented torus obtained via the quotient of $\rls^2$ by the $\ints^2$ lattice generated by $(1,0)$ and $(0,1)$.
		\item $\vec\alpha=\set{\alpha_0,\ldots,\alpha_{n-1}}$, with $\alpha_i$ the image of the line $y=\frac in$ in $T$. Call the $n$ annular components of $T-\vec\alpha$ the \emph{rows} of the grid diagram.
		\item $\vec\beta=\set{\beta_0,\ldots,\beta_{n-1}}$, with $\beta_i$ the image of the line $y=-\frac p{q}(x-\frac i{pn})$ in $T$. Call the $n$ annular components of $T-\vec\beta$ the \emph{columns} of the grid diagram.
		\item $\vec{\bb O}=\set{O_0, \ldots, O_{n-1}}$ is a set of $n$ points in $T-\vec\alpha-\vec\beta$ such that no two $O_i$'s lie in the same row or column.
		\item $\vec{\bb X}=\set{X_0, \ldots,X_{n-1}}$ is a set of $n$ points in $T-\vec\alpha-\vec\beta$ such that no two $X_i$'s lie in the same row or column.
	\end{itemize}
	
	The components of $T-\vec\alpha-\vec\beta$ are called the \emph{fundamental parallelograms} of $D$ and the points $\vec{\bb O}\cup\vec{\bb X}$ are called the \emph{markings} of $D$. Two grid diagrams with corresponding tori $T_1, T_2$ are considered equivalent if there exists an orientation-preserving diffeomorphism $T_1\to T_2$ respecting the markings (up to cyclic permutation of their labels).
\label{gridDefn}
\end{defn}

Such a grid diagram has ``slanted'' $\beta$ curves. For considerations of both convenience and aesthetics, we alter the fundamental domain of $T$ and ``straighten'' our pictures so that the $\beta$ curves are vertical. Figure \ref{fig:Strait} shows how this ``straightening'' is accomplished.

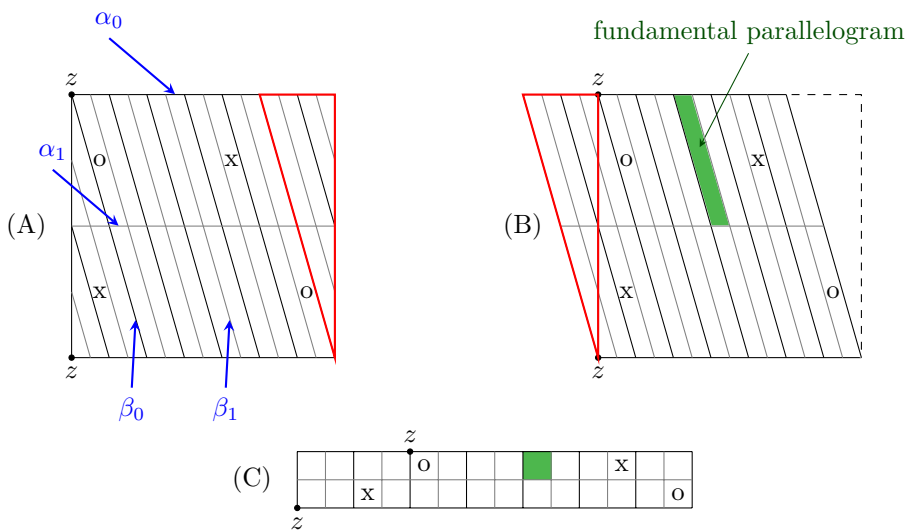
\begin{figure}[ht]
	\[\begin{tikzpicture}[>=stealth]
	\fill[green!60!black,fill opacity=0.70]
		(8.01,3.49) -- (8.51,1.76) -- (8.74,1.76) -- (8.24,3.49)--cycle;
	\fill[green!60!black, fill opacity=0.70]
		(6,-1.625) -- (6.375,-1.625) -- (6.375,-1.25) -- (6,-1.25) --cycle;
	\draw[green!30!black,->]
		(9,4.05) -- node [above, at start] {fundamental parallelogram} (8.35,2.8);
		
		\draw[step=3.5]	(0,0) grid (3.5,3.5);
	\foreach \x in {2,3,4,5,6,7}
		\draw (0.5*\x,0)--(0.5*\x-1,3.5);
	\foreach \x in {2,3,4,5,6}
		\draw[gray,thin] (0.5*\x+0.25,0)--(0.5*\x-0.75,3.5);
	\foreach \r in {0,180}
		\draw[rotate around = {\r:(1.75,1.75)}]
			(0.5,0) -- (0,1.75);
	\foreach \x in {0,1}
	\foreach \r in {0,180}
		\draw[gray,thin,rotate around={\r:(1.75,1.75)}]
			(0.25+0.5*\x,0) -- (0,0.875+1.75*\x);
	\draw[gray,thin]
		(0,1.75) -- (3.5,1.75);
	\draw
		(0.375,0.875) node {x}
		(0.375,2.625) node {o}
		(2.125,2.625) node {x}
		(3.125,0.875) node {o};

	\draw[step=3.5,cm={1,0,0,1,(7,0)}]
		(0,0) grid (3.5,3.5);
	\draw[white,dashed,very thick]
		(10.5,0)--(10.5,3.5)--(9.5,3.5);
	\draw
		(6,3.5)--(7,3.5);
	\foreach \x in {0,1,2,3,4,5,6,7}
		\draw[cm={1,0,0,1,(7,0)}] (0.5*\x,0)--(0.5*\x-1,3.5);
	\foreach \x in {0,1,2,3,4,5,6}
		\draw[gray,thin,cm={1,0,0,1,(7,0)}] (0.5*\x+0.25,0)--(0.5*\x-0.75,3.5);
	\draw[gray,thin,cm={1,0,0,1,(7,0)}]
		(-0.5,1.75) -- (3,1.75);
	\draw[cm={1,0,0,1,(7,0)}]
		(0.375,0.875) node {x}
		(0.375,2.625) node {o}
		(2.125,2.625) node {x}
		(3.125,0.875) node {o};
	
	\draw[step=0.75,cm={1,0,0,1,(3,-2)}]
		(0,0) grid (5.25,0.75);
	\draw[gray,thin,step=0.75,cm={1,0,0,1,(3.375,-1.625)}]
		(-0.375,-0.375) grid (4.875,0.375);
	\draw[cm={1,0,0,1,(3,-2)}]
		(0.9375,0.1875) node {x}
		(1.6875,0.5625) node {o}
		(5.0625,0.1875) node {o}
		(4.3125,0.5625) node {x};
	\foreach \x in {0,7}
	\foreach \y in {0,3.5}
	\filldraw
		(\x,\y) circle (1pt);
	\foreach \x in {0,7}
	\draw
		(\x,0) node [below] {$z$}
		(\x,3.5) node [above] {$z$};
	\filldraw
		(3,-2) circle (1pt)
		(4.5,-1.25) circle (1pt);
	\draw
		(3,-2) node [below] {$z$}
		(4.5,-1.25) node [above] {$z$};
	\draw[thick]
		(-0.6,1.75) node {(A)}
		(6,1.75) node {(B)}
		(2.4,-1.625) node {(C)};
	
	\foreach \t in {0,3.5}
	\draw[thick,red]
		(3.5+\t,0) -- (2.5+\t,3.5) -- (3.5+\t,3.5)--cycle;
	\draw[thick, blue,->]
		(0.5,4.25) -- node [above,at start] {$\alpha_0$} (1.375,3.505);
	\draw[thick,blue,->]
		(-0.25,2.5) -- node [above,at start] {$\alpha_1$} (0.625,1.755);
	\draw[thick, blue,->]
		(0.8,-0.4) -- node [below,at start] {$\beta_0$} (0.85,0.5);
	\draw[thick,blue,->]
		(2.05,-0.4) -- node [below,at start] {$\beta_1$} (2.1,0.5);
		
	\end{tikzpicture}\]
\caption{(A) shows a grid diagram (with grid number 2) in $L(7,2)$ on a fundamental domain of $T$. In (B) we alter the fundamental domain. (C) is the ``straightening'' of (B).}
\label{fig:Strait}
\end{figure}

The correspondence between grid diagrams in $L(p,q)$ and Legendrian links in $(L(p,q),\xi_{UT})$ was fully developed in \cite{BG}. We note that in the association of a Legendrian link to a grid diagram in $L(p,q)$ the author used the opposite convention (in \cite{C}) as that adopted in other places in the literature \cite{OST,OS,BG,BGH}. The convention used in other places in the literature was adopted to fit conventions coming from knot Floer homology. However, for the purposes of this paper it is more clear to use the approach presented below (as in \cite{C}) as there is no reference to Floer homology theories.

A link $K\subset L(p,q)$ is associated to a grid diagram $D$ in $L(p,q)$ in the following manner. Let $\Sigma$ be the torus in $L(p,q)$ of constant radius $r_1=1/\sqrt{2}$ which splits $L(p,q)$ into two solid tori $V^\alpha$ and $V^\beta$. Identify $T$ with $-\Sigma$ such that the $\alpha$-curves of $D$ are negatively-oriented meridians of $V^\alpha$ and the $\beta$-curves are meridians of $V^\beta$. Next connect each $X$ to the $O$ in its row by an ``horizontal'' oriented arc (from $X$ to $O$) that is embedded in $T$ and disjoint from $\vec\alpha$. Likewise, connect each $O$ to the $X$ in its column by a ``vertical'' oriented arc embedded in $T$ and disjoint from $\vec\beta$. The union of the $2n$ arcs makes a multicurve $\gamma$. Remove self-intersections of $\gamma$ by pushing the interiors of horizontal arcs up into $V^\alpha$ and the interiors of vertical arcs down into $V^\beta$. 

\begin{defn}
Let $K$ be a link associated to a grid diagram $D$ in $L(p,q)$ with grid number $n$. For some $0<m<n$, suppose $D'$ is a subcollection of $m$ rows and $m$ columns of $D$ such that the $2m$ markings contained in the rows of $D'$ are exactly the $2m$ markings contained in the columns of $D'$. Then $D'$ is a grid diagram for some sublink of $K$. If this sublink has one component then $D'$ is called a \emph{component of $D$}.
\label{defn:compDiag}
\end{defn}

\begin{rem}
No part of Definition \ref{gridDefn} prohibits a marking in $\bb X$ and a marking in $\bb O$ from being in the same fundamental parallelogram. To a grid diagram that has grid number one (and so, only one marking in $\bb X$ and one marking in $\bb O$) and its two markings in the same fundamental parallelogram, we associate a knot in $L(p,q)$ that is contained in a small ball neighborhood and bounds an embedded disk. 
\label{rem:unknot}
\end{rem}

\begin{rem} Except for the case described in Remark \ref{rem:unknot}, we assume that each marking of $D$ is the center point of the fundamental parallelogram that contains it.  Let the straightened fundamental domain of $T$ have normalized coordinates $\setn{(\theta_1,\theta_2)}{\theta_1\in[0,p],\ \theta_2\in[0,1]},$ so that each $O$ and $X$ sharing the same column have the same $\theta_1$-coordinate mod 1. 
\label{rpRem}
\end{rem}

Under the requirements of Remark \ref{rpRem}, the projection of $K$ to $-\Sigma$ (with vertical arcs crossing under horizontal arcs) is called a \emph{grid projection} associated to $D_K$ (the authors of \cite{BG} call this a rectilinear projection). Note that $K$ has an orientation given by construction and so the grid projection is also oriented. Figure \ref{gdexamFig} shows an example of a grid diagram with a corresponding grid projection.

A slight perturbation of a grid projection gives a (toroidal) front projection, which determines a Legendrian link in $(L(p,q),\xi_{UT})$. The cusps of the front projection correspond to lower-left and upper-right corners of the grid projection, and we call these corners the \emph{cusps} of a grid projection. 

\begin{figure}[ht]
	\[\begin{tikzpicture}[scale=0.115]
		\draw[step=21,thick]
			(0,0) grid (105,21);
		\draw[gray,thin,step=3]
			(0,0) grid (105,21);	
	\foreach \o in {(22.5,19.5),(31.5,1.5),(55.5,13.5),(58.5,10.5),(88.5,7.5),(91.5,4.5),(103.5,16.5)}
		\draw
			\o node {o};
	\foreach \x in {(25.5,16.5),(22.5,1.5),(40.5,10.5),(55.5,7.5),(58.5,4.5),(91.5,13.5),(94.5,19.5)}
		\draw
			\x node {x};
	\draw[blue,thick]
		(0,16.5) -- (25.5,16.5) -- (25.5,2)
		(25.5,1) -- (25.5,0)
		(31.5,0) -- (31.5,1.5) -- (22.5,1.5) -- (22.5,16)
		(22.5,17) -- (22.5,19.5) -- (94.5,19.5) -- (94.5,21)
		(40.5,0) -- (40.5,10.5) -- (58.5,10.5) -- (58.5,8)
		(58.5,7) -- (58.5,4.5) -- (91.5,4.5) -- (91.5,13.5) -- (55.5,13.5) -- (55.5,11)
		(55.5,10) -- (55.5,7.5) -- (88.5,7.5) -- (88.5,13)
		(88.5,14) -- (88.5,19)
		(88.5,20) -- (88.5,21)
		(103.5,21) -- (103.5,16.5) -- (105,16.5);
	\foreach \l in {0,1,2,3,4}
	\draw 
		(21*\l,0) node [below] {\l};
	\draw	(105,0) node [below] {0};	
	\foreach \l in {0,1,2}
	\draw
		(21*\l+63,21) node[above] {\l};
	\foreach \l in {2,3,4}
	\draw
		(21*\l-42,21) node[above] {\l};
	\filldraw
		(0,0) circle (16pt)
		(63,21) circle (16pt);
	
	\end{tikzpicture}\]
	\caption{A grid diagram for $L(5,3)$ with corresponding grid projection.}
	\label{gdexamFig}
\end{figure}

If $D$ has grid number $n$ then there are $2^{2n}$ different grid projections as there are two choices of vertical arc for each column, and two choices of horizontal arc for each row. In a given row (resp.\ column), the difference in choice of horizontal (resp.\ vertical) arc corresponds to a Legendrian isotopy across a meridional disk of $V^\alpha$ (resp.\ $V^\beta$). So the Legendrian isotopy class of the link is independent of the choice of grid projection. For more details we refer the reader to \cite{BG}.

Given a grid diagram in a lens space $L(p,q)$ (with the identification of $T$ to $-\Sigma$), the basis of vectors given by parallel translates of tangent vectors to $\{\alpha_0,\beta_0\}$ is coherently oriented with the global frame $\{d/d\theta_1,d/d\theta_2\}$.

In view of the correspondence above, Legendrian links in $(L(p,q),\xi_{UT})$ can be discussed via grid diagrams. There is a set of grid moves such that two grid diagrams correspond to the same Legendrian link if and only if there is a sequence of such grid moves taking one grid diagram to the other \cite{BG}. These moves come in two flavors: grid (de)stabilizations and commutations. 

\begin{figure}[ht]
	\begin{tikzpicture}[>=stealth]
	\draw[step=0.75,cm={1,0,0,1,(0,0)}]
		(0,0) grid (5.25,0.75);
	\draw
		(1.125,0.375) node {X}
		(4.875,0.375) node {O};
	\draw[->]
		(5.5,0.5) -- (6.5,0.5);
	\draw[step=0.75,cm={1,0,0,1,(6.7,0)}]
		(0,0) grid (5.25,0.75);
	\draw[gray,thin,step=0.75,cm={1,0,0,1,(7.075,0.375)}]
		(-0.375,-0.375) grid (4.875,0.375);
	\draw[cm={1,0,0,1,(6.7,0)}]
		(0.9375,0.1875) node {x}
		(1.3125,0.1875) node {o}
		(4.6875,0.5625) node {o}
		(1.3125,0.5625) node {x};
	\draw[step=0.75,cm={1,0,0,1,(0,-2)}]
		(0,0) grid (5.25,0.75);
	\draw[cm={1,0,0,1,(0,-2)}]
		(1.125,0.375) node {X}
		(4.875,0.375) node {O};
	\draw[->,cm={1,0,0,1,(0,-2)}]
		(5.5,0.5) -- (6.5,0.5);
	\draw[step=0.75,cm={1,0,0,1,(6.7,-2)}]
		(0,0) grid (5.25,0.75);
	\draw[gray,thin,step=0.75,cm={1,0,0,1,(7.075,-1.625)}]
		(-0.375,-0.375) grid (4.875,0.375);
	\draw[cm={1,0,0,1,(6.7,-2)}]
		(0.9375,0.1875) node {x}
		(1.3125+3.75,0.1875) node {o}
		(4.6875,0.5625) node {o}
		(1.3125+3.75,0.5625) node {x};

	\end{tikzpicture}
\caption{Stabilization of type X:NW (top) and of type O:SW (bottom)}
\label{fig:stabl}
\end{figure}
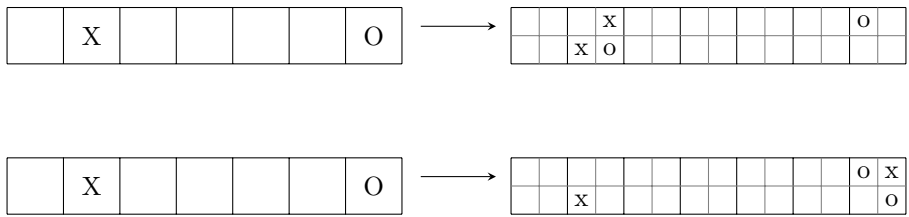

{\bf Grid Stabilizations and Destabilizations:} Grid stabilizations increase the grid number by one and should be thought of as adding a local kink to the knot. They are named with an X or O, depending on the type of marking at which stabilization occurs, and with NW,NE,SW, or SE, depending on the positioning of the new markings. Figure \ref{fig:stabl} shows an X:NW stabilization and an O:SW stabilization. Destabilizations are the inverse of a stabilization. Any (de)stabilization is a grid move that preserves the isotopy type. 

However, the correspondence between our grid diagrams and toroidal front projections is such that cusps correspond to upper-right and lower-left corners of a grid projection. Only (de)stabilizations of types NW and SE preserve Legendrian isotopy type.

\begin{figure}[ht]
	\[\begin{tikzpicture}[scale=0.15,>=stealth]
		\draw[step=8,thick]
			(0,0) grid (56,8);
		\draw[step=2,gray,thin]
			(0,0) grid (56,8);
	
	\draw[blue, thick]
		(5,5) -- (5,0)
		(21,8) -- (21,5.5) (21,4.5) -- (21,1) -- (7,1) -- (7,0)
		(23,8) -- (23,5.5) (23,4.5) -- (23,0)
		(39,8) -- (39,7) -- (56,7)
		(0,7) -- (3,7) -- (3,8)
		(43,0) -- (43,2.5) (43,3.5) -- (43,6.5) (43,7.5) -- (43,8)
		(27,0) -- (27,4.5) (27,5.5) -- (27,8)
		(11,0) -- (11,0.5) (11,1.5) -- (11,4.5) (11,5.5) -- (11,8)
		(51,0) -- (51,6.5) (51,7.5) -- (51,8)
		(35,0) -- (35,3) -- (49,3) -- (49,6.5) (49,7.5) -- (49,8)
		(33,0) -- (33,5) -- (5,5);
	
	\draw
		(3,7) node {x}
		(35,3) node {o}
		(5,5) node {x}
		(21,1) node {o};
	\draw
		(49,3) node {x}
		(33,5) node {o}
		(7,1) node {x}
		(39,7) node {o};
		
	\foreach \l in {0,1,2,3,4,5,6}
	\draw 
		(8*\l,0) node [below] {\l};
	\draw	(56,0) node [below] {0};	
	\foreach \l in {0,1,2,3,4,5}
	\draw
		(8*\l+16,8) node[above] {\l};
	\foreach \l in {5,6}
	\draw
		(8*\l-40,8) node[above] {\l};
	
	\foreach \c in {0,1,2,3,4,5,6}
		\draw[<->]
			(3+8*\c,9) -- (5+8*\c,9);
	
	\filldraw
		(0,0) circle (10pt)
		(16,8) circle (10pt);
		
	\draw[step=8,thick]
		(24,-16) grid (80,-8);
	\draw[step=2,gray,thin]
		(24,-16) grid (80,-8);	

	\draw[thick,blue,cm={1,0,0,1,(24,-16)}]
		(3,5) -- (3,0)
		(19,8) -- (19,5.5) (19,4.5) -- (19,1) -- (7,1) -- (7,0)
		(23,8) -- (23,5.5) (23,4.5) -- (23,0)
		(39,8) -- (39,7) -- (56,7)
		(0,7) -- (5,7) -- (5,8)
		(45,0) -- (45,2.5) (45,3.5) -- (45,6.5) (45,7.5) -- (45,8)
		(29,0) -- (29,4.5) (29,5.5) -- (29,8)
		(13,0) -- (13,0.5) (13,1.5) -- (13,4.5) (13,5.5) -- (13,8)
		(53,0) -- (53,6.5) (53,7.5) -- (53,8)
		(37,0) -- (37,3) -- (49,3) -- (49,6.5) (49,7.5) -- (49,8)
		(33,0) -- (33,5) -- (3,5);
	\draw[cm={1,0,0,1,(24,-16)}]
		(5,7) node {x}
		(37,3) node {o}
		(3,5) node {x}
		(19,1) node {o};
	\draw[cm={1,0,0,1,(24,-16)}]
		(49,3) node {x}
		(33,5) node {o}
		(7,1) node {x}
		(39,7) node {o};
		
	\foreach \l in {0,1,2,3,4,5,6}
	\draw [cm={1,0,0,1,(24,-16)}]
		(8*\l,0) node [below] {\l};
	\draw[cm={1,0,0,1,(24,-16)}]	(56,0) node [below] {0};	
	\foreach \l in {0,1,2,3,4,5}
	\draw[cm={1,0,0,1,(24,-16)}]
		(8*\l+16,8) node[above] {\l};
	\foreach \l in {5,6}
	\draw[cm={1,0,0,1,(24,-16)}]
		(8*\l-40,8) node[above] {\l};
	\draw[->,thick]
		(8,-5) .. controls (4,-12) and (21,-12) .. (23,-12);
	
	\filldraw
		(24,-16) circle (10pt)
		(40,-8) circle (10pt);
	
	\end{tikzpicture}\]
	\caption{A non-interleaving commutation in $L(7,2)$}
	\label{fig:commut}
\end{figure}
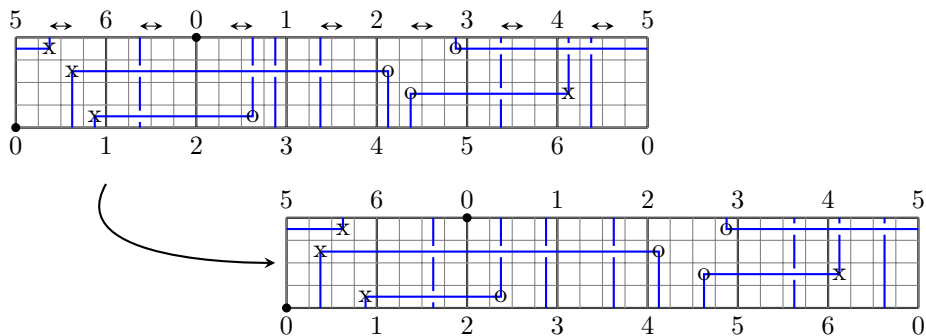

{\bf Commutations:} A commutation interchanges two adjacent columns (or rows) of the grid diagram. Let $A$ be the annulus consisting of the two adjacent columns $c_1,c_2$ (resp.\ rows $r_1,r_2$) involved in the commutation. This annulus is sectioned into $pn$ segments of the $n$ rows (resp.\ columns) of the grid diagram. Let $s_1, s_1'$ be the two segments in $A$ containing the markings of $c_1$ (resp.\ $r_1$). If the markings of $c_2$ (resp.\ $r_2$) are contained in separate components of $A-s_1-s_1'$, the commutation is called \emph{interleaving}. If they are in the same component of $A-s_1-s_1'$ the commutation is called \emph{non-interleaving}. We note that in the literature a commutation typically refers only to what we call a non-interleaving commutation. We have extended the terminology to include the interleaving case. A non-interleaving commutation of columns (resp.\ rows) is a grid move that preserves Legendrian isotopy type \cite{BG}. An interleaving commutation corresponds to a crossing change (see \cite{C}). An example of non-interleaving commutation is shown in Figure \ref{fig:commut}.

Note that a commutation (interleaving or non-interleaving) does not include a column exchange of the type illustrated in Figure \ref{fig:illegalcommut}, where there is a row containing markings of both $c_1$ and $c_2$.

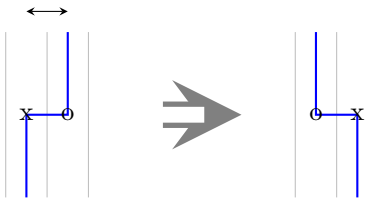
\begin{figure}[ht]
	\begin{tikzpicture}[scale=0.055,>=stealth]
		\foreach \t in {0,70}
		\draw[gray!50!white,thin]
			(0+\t,0) -- (0+\t,40)
			(10+\t,0) --(10+\t,40)
			(20+\t,0) --(20+\t,40);
		\draw[blue,thick]
			(5,0) -- (5,20) -- (15,20) -- (15,40);
		\draw[blue,thick]
			(85,0) --(85,20)-- (75,20) -- (75,40);
		\draw
			(5,20) node {x}
			(15,20)node {o};
		\draw
			(85,20)node {x}
			(75,20)node {o};
	\draw[<->] (5,45) -- (15,45);
	\draw[gray,->,line width=10pt] (63-25,20) -- (82-25,20);
	\draw[line width=6pt,white] (62-25,20) -- (73-25,20);

	\end{tikzpicture}
	\caption{A move which is neither an interleaving nor non-interleaving commutation}
	\label{fig:illegalcommut}
\end{figure}

\subsection{The HOMFLY polynomial in lens spaces}
\label{subsec:HOMpoly}

Let $M$ be a rational homology 3-sphere that is either atoroidal or Seifert-fibered with orientable orbit space. Fix a collection of ``trivial links'' in $M$ having one representative from each homotopy class of links. Kalfagianni and Lin show \cite{KL} that, given a choice of value $J_M(\tau)$ for each trivial link $\tau$ that has no nullhomotopic components, and a choice $J_M(U)$ for the standard unknot $U$, there exists a unique power-series valued invariant $J_M$ with coefficients that are Vassiliev invariants, satisfying the HOMFLY skein relation. 

In \cite{C} the author explicitly defined a collection of \emph{trivial links} in $L(p,q)$ via toroidal grid diagrams. To define these diagrams, let $\mu(P)$ be the signed intersection of an $\alpha$ curve, say $\alpha_0$, with a grid projection $P$ ($\mu(P)$ also represents the homology class of the link associated to $P$). If $P, P'$ are different grid projections for the same grid diagram, then $\mu(P)\equiv\mu(P')\text{ (mod }p)$. Every marking of a grid diagram is in some component of that diagram (recall Definition \ref{defn:compDiag}). Given an $\bb O$ marking $O$, if $C$ is a grid projection of the component containing $O$, we say that $\mu(O)=i$ if $i\equiv\mu(C)\text{ (mod }p)$. 

A \emph{trivial link diagram} $D(\cl I)$ is determined by an index set, which is a $p$-tuple $\cl I=(m_0,m_1,\ldots,m_{p-1})$, where $m_i$ is a non-negative integer for each $i$, in the following manner. Let $n=\sum_{i=0}^{p-1}m_i$. Then $D(\cl I)$ is the unique grid diagram, with grid number $n$, with the following three properties:
\begin{itemize}
	\item all its $\bb O$ markings have coordinates $(\theta_1,\theta_2)=\left(\frac{2i-1}{2n},1-\frac{2i-1}{2n}\right)$ for some $1\le i\le n$;
	\item every component of $D(\cl I)$ has grid number one;
	\item for each $0\le i\le p-1$ there are $m_i$ markings in $\bb O$ with $\mu(O)=i$, and these markings are ordered along the diagonal so that if $\mu(O)\cdot q  \text{ (mod }p)<\mu(O')\cdot q \text{ (mod }p)$, then $\theta_1(O)<\theta_1(O')$.
\end{itemize}
	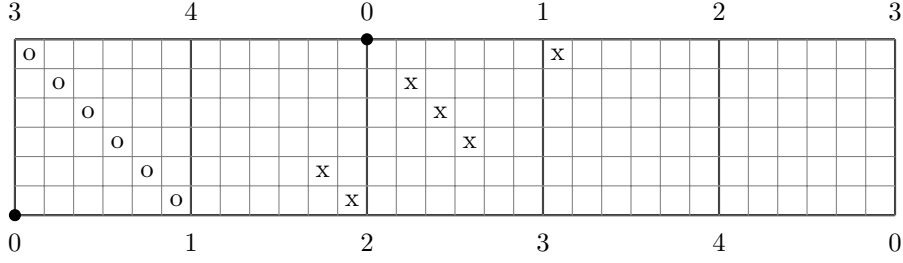
\begin{figure}
		\[\begin{tikzpicture}[scale=0.13,>=stealth]
	\draw[step=18,thick]
			(0,0) grid (90,18);
	\draw[gray,thin,step=3]
			(0,0) grid (90,18);
	\filldraw 
		(0,0) circle (16pt)
		(36,18) circle (16pt);
	\foreach \num in {0,1,2,3}	
	\draw
		(0+\num*18,-1) node[below] {\num}
		(36+\num*18,19) node[above] {\num};
	\draw
		(0,19) node[above] {3}
		(18,19) node[above] {4}
		(72,-1)node[below] {4}
		(90,-1)node[below] {0};
		
	\foreach \x in {(55.5,16.5), (40.5,13.5), (43.5,10.5), (46.5,7.5),(31.5,4.5),(34.5,1.5)}
		\draw 	\x node {x};
	\foreach \o in {(1.5,16.5),(4.5,13.5),(7.5,10.5),(10.5,7.5),(13.5,4.5),(16.5,1.5)}
		\draw	\o node {o};	
		\end{tikzpicture}\]
	\caption{The trivial link diagram $D(\cl I)$ in $L(5,2)$ with $\cl I=(0,1,2,0,3)$.}
	\label{fig:TrivLink}
	\end{figure}

A \emph{trivial link} is any link isotopic to the link associated to a trivial link diagram. Note that any knot admitting a grid number one diagram is a trivial knot. An example of a trivial link diagram is shown in Figure \ref{fig:TrivLink}. The following was shown in \cite{C}:
\begin{thm}
Let $\scr L$ be the set of isotopy classes of oriented links in $L(p,q)$ and let $\scr{TL}\subset\scr L$ denote the set of isotopy classes of trivial links. Define $\scr{TL}^*\subset\scr{TL}$ to be those trivial links with no nullhomologous components. Let $U$ be the isotopy class of the standard unknot, a local knot in $L(p,q)$ that bounds an embedded disk. Suppose we are given a value $J_{p,q}(\tau)\in\ints[a^{\pm1},z^{\pm1}]$ for every $\tau\in\scr{TL}^*$. Then there is a unique map $J_{p,q}:\scr{L}\to\ints[a^{\pm1},z^{\pm1}]$ such that
	\en{
	\item[(i)] $J_{p,q}$ satisfies the skein relation
		\begin{equation*}
		a^{-p}J_{p,q}(L_+)-a^pJ_{p,q}(L_-)=zJ_{p,q}(L_0).
		\end{equation*}
	\item[(ii)] $J_{p,q}(U)=a^{-p+1}$.
	\item[(iii)] $J_{p,q}\left(U\coprod L\right)=\frac{a^{-p}-a^p}{z}J_{p,q}(L)$.
	}

\label{thm:HOMpoly}
\end{thm}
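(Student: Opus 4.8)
The plan is to treat uniqueness and existence separately, in each case relying on the finite skein tree for links in $L(p,q)$ constructed in \cite{C}, whose leaves are trivial links; the common engine is the observation that any invariant satisfying the skein relation (i) is determined on a link $L$ by its values at the leaves of any skein tree for $L$. For \emph{uniqueness}, suppose $J_{p,q}$ and $J_{p,q}'$ both satisfy (i)--(iii) and take the prescribed values on $\scr{TL}^*$. Fix a skein tree for $L$. At an internal node, (i) writes the value there as a $\ints[a^{\pm1},z^{\pm1}]$-combination of the values at its two children (each of $L_+$, $L_-$ is determined by the other two links of its skein triple, as $a^{\pm p}$ are units), so by induction it suffices to check agreement at the leaves. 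A leaf is a trivial link $\tau$, and by the construction of \cite{C} such a $\tau$ is the split union of its grid-number-one components, a component being an unknot precisely when it is nullhomologous; hence $\tau=\tau_0\sqcup U^{\sqcup k}$ with $\tau_0\in\scr{TL}^*$ (possibly empty) and $k\ge0$, so $k$ applications of (iii), followed if $\tau_0=\emptyset$ by one use of (ii), pin $J_{p,q}(\tau)$ down from the prescribed data. Thus $J_{p,q}=J_{p,q}'$.

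For \emph{existence}, the quickest route invokes \cite{KL}: $L(p,q)$ is a rational homology sphere of the type treated there (a Seifert fibered space with orientable orbit space $S^2$), so Kalfagianni--Lin produce a power-series--valued link invariant satisfying the HOMFLY skein relation and realizing any prescribed values on the trivial links with no nullhomotopic components and on $U$. Expanding such a value along the finite skein tree of \cite{C} exhibits it as a $\ints[a^{\pm1},z^{\pm1}]$-linear combination of the (Laurent polynomial) leaf values --- this is the convergence statement of \cite{C}, under the substitution $v=a^p$ --- so it defines a map $J_{p,q}\colon\scr{L}\to\ints[a^{\pm1},z^{\pm1}]$. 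Then (i) is the skein relation, (ii) is the normalization $J_{p,q}(U)=a^{-p+1}$, and (iii) follows from (i) applied at a curl: in the skein triple consisting of $K$ with a positive curl, $K$ with a negative curl, and $K\sqcup U$, the first two links are isotopic to $K$, whence $J_{p,q}(K\sqcup U)=\tfrac{a^{-p}-a^p}{z}\,J_{p,q}(K)$.

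Alternatively, one can construct $J_{p,q}$ directly: prescribe the leaf values (using the reduction above to fix the trivial links with nullhomologous components and $U$) and define $J_{p,q}(L)$ recursively by the skein relation along a chosen skein tree. The real content is then \emph{well-definedness} --- independence of the chosen grid projection of $L$, of which crossings are resolved, and of the order of resolution --- and I expect this to be the main obstacle. It is handled as in the classical resolving-tree construction of the HOMFLY polynomial, now transported to the grid calculus of Section~\ref{subsec:lensGrids}: a double induction on the number of crossings of a grid projection and on the complexity of reducing it to a trivial diagram, checking that the recursion is unchanged under each grid move --- commutations, interleaving commutations (crossing changes), and NW/SE (de)stabilizations --- and under permuting crossing resolutions. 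Once this is in place, properties (i)--(iii) hold as above and the resulting map is automatically an isotopy invariant.
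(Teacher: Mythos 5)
Your proposal takes essentially the same route as the paper, which does not reprove this theorem but imports it from \cite{C}: existence via the Kalfagianni--Lin skein invariant \cite{KL} together with the grid-diagram skein theory of \cite{C} (the finite skein tree with trivial-link leaves, Lemma \ref{lemRuth}, and the complexity $\psi$) showing convergence to Laurent polynomials, and uniqueness by induction along such a skein tree, with property (iii) and the curl argument handling trivial links containing unknot components. One small overstatement to fix: a trivial link is generally \emph{not} the split union of all its grid-number-one components (since $L(p,q)$ is irreducible, two non-nullhomotopic components can never be separated by a sphere), but the weaker fact you actually use --- that the nullhomologous components are local unknots that split off, so $\tau=\tau_0\sqcup U^{\sqcup k}$ with $\tau_0\in\scr{TL}^*$ --- is correct and suffices.
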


Theorem \ref{thm:HOMpoly} provides a HOMFLY polynomial for links in a lens space. In order to prove Corollary \ref{BennBd} we choose a normalization of $J_{p,q}$ so that the inequality of Corollary \ref{BennBd} is at least satisfied on $\scr{TL}$. If $\tau\in\scr{TL}^*$ has trivial link diagram $D(\cl I)$, then we normalize so that $J_{p,q}(\tau)=a^{p\cdot\slk_\rat(T(\tau)))+1}$ where $T(\tau)$ is the transverse pushoff of the Legendrian link associated to $D(\cl I)$ defined at the end of Section \ref{subsec:ratLegLinks}. 

In the course of proving this theorem the author described a skein theory that deals only with grid diagrams. A crossing in this skein theory, called a \emph{skein crossing}, is a pair of adjacent columns of the grid diagram that are interleaving. A \emph{skein crossing change} is made by commutation of the two adjacent columns in the skein crossing and a \emph{resolution} of a skein crossing is achieved by interchanging two of the markings in the pair of columns that are of the same type: either both are in $\bb X$ or both are in $\bb O$ (if one interchanges the $\bb O$ markings instead of those in $\bb X$, the difference is a non-interleaving commutation, which is an isotopy). That is, our skein relations involve diagrams $D_+$, $D_-$ and $D_0$ that differ only at a pair of adjacent columns, where they differ as in Figure \ref{fig:skeinTheory}. Through the course of the paper a triple of links $L_+, L_-$, and $L_0$ is a set of three links that (up to isotopy) admit grid diagrams $D_+, D_-$, and $D_0$ respectively.
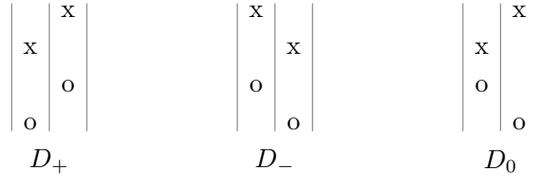
\begin{figure}[h]
	\[\begin{tikzpicture}
\foreach \t in {0,3,6}
	\draw[gray,thin]
		(\t-0.25,-0.1) -- (\t-0.25,1.6)
		(\t+0.25,-0.1) -- (\t+0.25,1.6)
		(\t+0.75,-0.1)-- (\t+0.75,1.6);
	\draw
		(0,0) node {o}
		(0,1) node {x}
		(0.5,0.5) node {o}
		(0.5,1.5) node {x}
		(0.25,-0.5) node {$D_+$};
	\draw
		(3,0.5) node {o}
		(3,1.5) node {x}
		(3.5,0) node {o}
		(3.5,1) node {x}
		(3.25,-0.5) node {$D_-$};
	\draw
		(6,0.5) node {o}
		(6,1) node {x}
		(6.5,0) node {o}
		(6.5,1.5) node {x}
		(6.25,-0.5) node {$D_0$};
	\end{tikzpicture}\]
\caption{A positive, negative, and resolved skein crossing on a grid diagram.}
\label{fig:skeinTheory}
\end{figure}

The proof of Theorem \ref{thm:HOMpoly} also relies on an important lemma which reveals how to reduce the complexity of a grid diagram in an understood way. Since we will also use this lemma to prove the FWM inequality in $L(p,q)$, we restate it here.

\begin{lem}[Lemma 4.4, \cite{C}] Let $K$ be a Legendrian link in $L(p,q)$ associated to a grid diagram $D_K$. Suppose $D_K$ has a component with grid number more than $1$. Then there exists a sequence of commutations followed by a destabilization giving a new grid diagram $D'$ such that $GN(D')<GN(D_K)$, where $GN(D)$ denotes the grid number of a grid diagram $D$. 
\label{lemRuth}
\end{lem}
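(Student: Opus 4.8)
The plan is to produce the destabilization entirely inside the component $C$ of grid number $m\geq 2$ supplied by the hypothesis. The starting observation is structural: $C$ uses $m$ of the $n$ rows and $m$ of the $n$ columns of $D_K$, and by the definition of a component every marking lying in one of its rows also lies in one of its columns, and conversely. Hence a commutation, or a (de)stabilization, performed using only rows and columns of $C$ — as well as sliding a row (column) of $C$ past a row (column) not in $C$ — extends to a move on $D_K$ that changes nothing outside $C$. So it suffices to find, after some commutations, one destabilization lowering $GN(C)$, and therefore $GN(D_K)$, by one.

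The combinatorial core is a dichotomy. Pick any $\bb X$-marking $X_0$ of $C$; let $O_0$ be the $\bb O$-marking of $C$ in its row, $O_1$ the $\bb O$-marking of $C$ in its column, and $X_1$ the $\bb X$-marking of $C$ in the column of $O_0$. Write $a,b$ for the rows of $X_0,O_1$ and $\gamma,\delta$ for the columns of $X_0,O_0$, so that $X_0,O_0,O_1$ occupy the cells $(a,\gamma),(a,\delta),(b,\gamma)$. If the cell $(b,\delta)$ is empty, then — after commutations that slide the columns of $D_K$ lying between $\gamma$ and $\delta$ out of the way to make $\gamma,\delta$ adjacent, and similarly bring a suitable pair of rows together — we obtain a $2\times 2$ block of consecutive rows and columns carrying exactly three markings in an L-pattern, which is a destabilization square; destabilizing there finishes the proof. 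If instead $(b,\delta)$ is occupied, its marking lies in column $\delta$ and row $b$, and since $O_1$ is the unique $\bb O$-marking of row $b$ this marking is an $\bb X$-marking, necessarily $X_1$ (with row $c$ of $X_1$ equal to $b$); but then tracing the grid projection the arcs close up as $X_0\to O_0\to X_1\to O_1\to X_0$, so $C$ is a one-component diagram on the two rows $a,b$ and two columns $\gamma,\delta$, i.e.\ $GN(C)=2$. Thus for $m\geq 3$ the first alternative always holds, and the leftover case $GN(C)=2$ is handled by a direct check.

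The step I expect to be the main obstacle is arranging the commutations above to be \emph{legal}: none may be the forbidden exchange of Figure \ref{fig:illegalcommut}, in which a single row or column meets both of the exchanged columns or rows. Columns and rows not belonging to $C$ are harmless here, since by the block structure they carry no markings in rows $a,b$; the genuine difficulty is the columns of $C$ lying between $\gamma$ and $\delta$ and the rows of $C$ lying between the rows one wants to merge. Here one must exploit the available freedom: the choice of $X_0$ among the $m$ markings, the fact that \emph{either} of the row-pairs $\{a,b\}$ or $\{a,c\}$ (with $c$ the row of $X_1$) completes to a valid destabilization square once $\gamma,\delta$ are adjacent, and the choice of which way around the torus to slide a given column or row. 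I would organize this as an innermost-arc argument on the grid projection of $C$: take a horizontal arc spanning the fewest columns of $C$ and show, using connectedness of $C$ together with $m\geq 3$, that it spans none and that one of the two L-completions at one of its endpoints avoids every obstructing row. Making this selection precise is the crux; the remainder is bookkeeping with the grid moves of Section \ref{sec:Prelim}.
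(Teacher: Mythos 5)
Your proposal does not establish the lemma, for two concrete reasons. First, by your own account the crux is deferred: you state that the genuine difficulty is making the commutations legal (avoiding the forbidden exchange of Figure \ref{fig:illegalcommut}, i.e.\ never commuting two columns that have markings in a common row), and you only indicate that you ``would'' handle this by an innermost-arc argument. But since every marking in a row of $C$ lies in a column of $C$, the obstructing columns between $\gamma$ and $\delta$ are exactly other columns of the same component, and deciding how to slide them out of the way (and which of the available completions to use) is precisely the content of the lemma; calling the remainder bookkeeping leaves the statement unproved. For what it is worth, an arc-shortening argument is indeed the right spirit --- the remark following the lemma in this paper records what the actual proof in \cite{C} must track (column commutations may be interleaving, row commutations can be kept non-interleaving, and no commutation involves two columns with markings in a common row) --- but none of that is carried out here.

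Second, your combinatorial bookkeeping uses the wrong local model for lens space grids. In $L(p,q)$ a row and a column intersect in $p$ fundamental parallelograms, not one, so ``the cell $(b,\delta)$'' is not well defined, and the dichotomy built on it (empty versus occupied, with the occupied case forcing $GN(C)=2$) does not make sense as stated. More importantly, having $X_0,O_0,O_1$ in rows $\{a,b\}$ and columns $\{\gamma,\delta\}$ with those rows and columns adjacent is not a destabilization square: the inverse of the stabilizations in Figure \ref{fig:stabl} requires the three markings to occupy three parallelograms of one specific $2\times2$ block of adjacent parallelograms. Even after $\gamma,\delta$ are made adjacent, the parallelograms containing $X_0$ and $O_0$ may be separated around the row-$a$ annulus by further crossings of $\gamma$ and $\delta$ themselves (each column meets each row $p$ times), and no commutation can remove those; so the ``first alternative'' does not yield a destabilization without substantial extra argument about positions mod $p$. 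Finally, the $GN(C)=2$ case is only asserted to follow from ``a direct check'' and is not checked; in the lens space setting it again requires arranging a genuine $2\times2$ block of parallelograms, which is exactly the unaddressed difficulty above.
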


We remark that the sequence of commutations in Lemma \ref{lemRuth} may have both interleaving and non-interleaving commutations, and that a commutation cannot involve two columns that have markings in the same row. Further, it was seen in the proof that one can ensure that none of the row commutations in the sequence are interleaving, and thus the only commutations that are not Legendrian isotopy are skein crossing changes. It was also shown in \cite{C} that the links associated to grid diagrams $D_+, D_-,$ and $D_0$ form a skein triple.

Finally, a complexity $\psi$ was defined in \cite{C} on grid diagrams in $L(p,q)$. We will not review how to define $\psi$ here, but wish to make two remarks. First, $\psi$ is minimized (within a homotopy class) on certain diagrams associated to trivial links. Second, given any grid diagram $D_{\pm}$ with a skein crossing, the complexity $\psi$ decreases under the resolution $D_{\pm}\rightsquigarrow D_0$.
\subsection{Rationally null-homologous Legendrian links}
\label{subsec:ratLegLinks}

We review here the definitions given in \cite{BE} (see also \cite{BG}) of the Thurston-Bennequin number and rotation number, and for transverse links the self-linking number, in the setting of a rationally null-homologous link. 

To begin requires the notion of a \emph{rational Seifert surface}. Let $K$ be an oriented rationally null-homologous knot in a manifold $M$ and let $r$ be the order of the homology class of $K$ in $H_1(M,\ints)$. Then, writing $\nu(K)$ for a normal neighborhood of $K$, for some $s\in\ints$ there is a curve $\gamma$ of slope $(r,s)$ on $\bd\nu(K)$ that bounds an oriented surface $\Sigma^0\subset M\setminus\nu(K)$. Let $\Sigma$ be the union of $\Sigma^0$ and the cone in $\nu(K)$ of $\gamma$ to $K$. Figure \ref{qSfcConst} depicts a meridional cross-section of $\nu(K)$ and the cone  of $\gamma$ to $K$.

\begin{figure}[ht]
	\[\begin{tikzpicture}[>=stealth]
		\draw[very thick] (0,0) ellipse (3 and 2);
		\draw[thick] 
			(-1,0.2) 	.. controls (-1,-.3) and (1,-.3) .. (1,0.2)
			(-0.8,0) 	.. controls (-0.8,0.3) and (0.8,0.3) .. (0.8,0);
			
		\draw[red,thick,scale=1.2]	(0,-1.5) 
			.. controls (-2.2,-1.25) and (-2.4,-0.3) .. (-2,0.4)
			.. controls (-1.5,1.1) and (-0.5,1.22) .. (0,1.2)
			.. controls (1.4,1.17) and (1.3,0.25) .. (0.75,0);
		\draw[dashed,red,scale=1.2]	(0.55,-0.075)
			.. controls (-2.75,-0.75) and (-1,2.5) .. (1.4,1.38);
		\draw[red,thick,scale=1.2]		(1.4,1.38)
			.. controls (2.5,1) and (2.7,-1.15) .. (0,-1)
			.. controls (-2.6,-0.85) and (-1.5,1.1) .. (-0.4,0.5)
			.. controls (0.04,0.14) and (-0.17,0.25) .. (0,0.185);
		\draw[red,scale=1.2,dashed]		(0,0.185)
			.. controls (0.27,0.1) and (0.56,0.35) .. (1,0.4)
			.. controls (2,0.55) and (2.8,-0.3) .. (1.8,-1.18);
		\draw[red,thick,scale=1.2]	(1.8,-1.18)
			.. controls (0.85,-1.6) and (0.43,-1.53) .. (0,-1.5);	
		\draw[dashed,thick]	(1.9,0) ellipse (1.1 and 0.5);
		\filldraw	(1.9,0) circle (1pt);
		\draw	(0,0) ellipse (1.9 and 1);
		
		\draw
		(1.9,0.5) -- (6.23,1.47)
		(1.9,-0.5) -- (6.23,-1.47);
		\draw[dashed,thick]	(6.5,0) circle (1.5);
		\draw[red,thick]
			(5.6,1.2) -- (6.5,0) -- (7.4,1.2)
			(6.5,0) -- (6.9,-1.45);
		\filldraw
			(6.5,0) circle (2pt);
		\filldraw[red]
			(5.6,1.2) circle (2pt)
			(7.4,1.2) circle (2pt)
			(6.9,-1.45) circle (2pt);
		\draw 	(0,-1) node[above] {$K$}
				(6.5,0) node [right] {$K$}
				(-1,-1.5) node {$\gamma$};
		\draw[<-]
			(7,0.7) -- (6,1.75);
		\draw
			(6,1.75) node[above] {cone of $\gamma$ to $K$};
		
	\end{tikzpicture}\]
\caption{Constructing a rational Seifert surface}
\label{qSfcConst}
\end{figure}
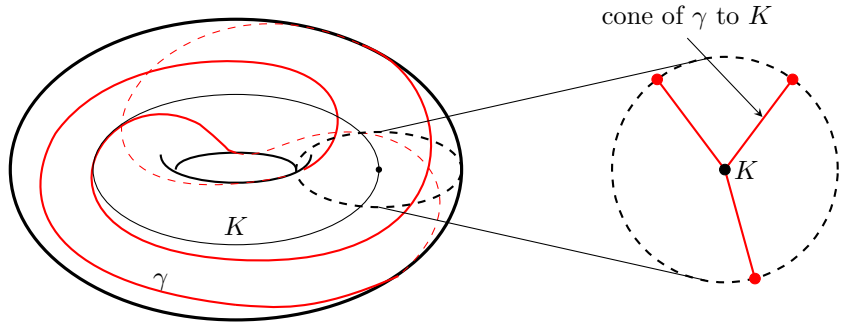

Note that the interior of $\Sigma$ is an embedded surface in $M$ and $\bd\Sigma$ is an $r$-fold cover of $K$.

\begin{defn} Let $K$ be an oriented, rationally null-homologous knot in $M$ with order $r$ in $H_1(M,\ints)$. A \emph{rational Seifert surface for $K$} is an oriented surface $\Sigma$ with a map $j:\Sigma\to M$ such that $j$ is an embedding on the interior of $\Sigma$, $j(\bd\Sigma)=K$, and $j\vert_{\bd\Sigma}$ is an $r$-fold cover of $K$.
\end{defn}

The previous discussion shows that every oriented rationally null-homologous knot has a rational Seifert surface. Often we will abuse terminology and understand a Seifert surface for a rationally null-homologous knot to be a rational Seifert surface. We can now define the classical invariants for Legendrian and transverse knots in the case of rationally null-homologous knots.

\begin{defn}
Let $K$ be an oriented, rationally null-homologous knot with order $r$ as above and let $j:\Sigma\to M$ be a Seifert surface for $K$.
\en{
	\item Given another oriented knot $K'$, define 
		\[lk_\rat(K,K') = \frac1r\Sigma\cdot K'.\]
	\item If in addition $K$ is a Legendrian knot in $(M,\xi)$ and $K'$ is a pushoff of $K$ in the direction of the contact framing given by $\xi\vert_{K}\cap\nu(K)$, then define the \emph{(rational) Thurston-Bennequin number} of $K$ by
				\[\tb_\rat(K) = lk_\rat(K,K').\]
	\item For $x\in K$ let $dK_x$ denote the tangent vector to $K$ at $x$. For $K$ Legendrian as above, $dK$ is a section of the bundle $\xi_K$. Take a trivialization $j^*\xi_\Sigma\cong\rls^2\times\Sigma$. Define the \emph{(rational) rotation number} of $K$ to be the winding number of $j^*dK$ in $\rls^2$ under this trivialization, divided by $r$:
		\[\rot_\rat(K)=\frac1r\text{winding}(j^*dK,\rls^2).\]
	\item If $K$ is a transverse knot in $(M,\xi)$, let $v$ be a non-zero section of $j^*\xi$. Normalize $v$ so that $v\vert_{\bd\Sigma}$ defines a curve $K'$ in $\bd\nu(K)$. Define the \emph{(rational) self-linking} of $K$ to be
		\[\slk_\rat(K)=lk_\rat(K,K').\]
}
\label{defn:classicalInvts}
\end{defn}

\begin{rem}
In general, the rotation number of a Legendrian knot and the self-linking number of a transverse knot depend on the relative homology class of $\Sigma$. Yet once this class is fixed, they do not depend on other choices made {--} the trivialization of $j^*\xi_{\Sigma}$ in the case of $\rot_\rat$ and the section $v$ in the case of $\slk_\rat$ (see \cite{BE}). The dependence on the homology of $\Sigma$ does not apply to lens spaces, however, since $H_2(L(p,q))=0$.
\end{rem}

In the case of null-homologous links (e.g. Legendrian links in $(S^3,\xi_{st})$) these numbers are known as the ``classical invariants'' of Legendrian and transverse links. In this case the classical invariants are always integers. However, in the case of rationally null-homologous links, these numbers are generally rational.

We recall that for any Legendrian knot $K$ in a contact manifold $(M,\xi)$, there is a related transverse knot $T_+(K)$ called the \emph{positive transverse push-off of} $K$ (see \cite{B}). Using this construction on each component of a link, we can get the positive transverse push-off of a Legendrian link. To construct $T_+(K)$, find a tubular neighborhood of $K$ that is contactomorphic, for sufficiently small $\ve$, to $C_\ve=\setn{[(x,y,z)]}{y^2+z^2<\ve^2, x=x+1}$: the quotient of an $\ve$-neighborhood of the $x$-axis in $(\rls^3,\ker(dz-ydx))$ by the action $x\mapsto x+1$. Here the orientation of the image of $K$ under the contactomorphism is in the direction of increasing $x$-values. The positive transverse push-off $T_+(K)$ is defined to be the image of $\set{(x,\ve/2,0)}$ in the neighborhood of $K$. We can also define the negative transverse push-off $T_-(K)$ to be the image of $\set{(x,-\ve/2,0)}$. Baker and Etnyre show the following \cite{BE}:

\begin{lem}
Let $K$ be a rationally null-homologous Legendrian link and let $T_\pm(K)$ be defined as above. Then
	\[\emph{slk}_\rat(T_{\pm}(K)) = \emph{tb}_\rat(K)\mp\emph{rot}_\rat(K).\]
\label{lem:slkTOtb-rot}
\end{lem}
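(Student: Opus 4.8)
This is the rational-coefficient analogue of the familiar identity in $(S^3,\xi_{st})$ that the positive and negative transverse push-offs of a Legendrian knot carry self-linking numbers $\tb-\rot$ and $\tb+\rot$, and I would prove it by the same mechanism, now carried out over the $r$-fold cover $\partial\Sigma\to K$. It suffices to treat a single Legendrian knot $K$ of order $r$ in $H_1$, the general link statement following by adding the contributions of the components and of their pairwise rational linking numbers. Fix a rational Seifert surface $j\colon\Sigma\to M$ for $K$. Since $T_\pm(K)$ lies in the model neighborhood $C_\ve$ of $\nu(K)$ and is smoothly isotopic to $K$, build a rational Seifert surface $\Sigma_\pm$ for $T_\pm(K)$ that coincides with $\Sigma$ outside $\nu(K)$ and meets $\nu(K)$ in the cone of a suitable curve on $\partial\nu(K)$ to $T_\pm(K)$. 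Choose a trivialization $\tau$ of $\xi|_\Sigma$ and a nowhere-zero section $v_\pm$ of $j^*\xi$ over $\Sigma_\pm$ that is constant in the trivialization $\tau$ over $\Sigma\setminus\nu(K)$. Because $T_\pm(K)$ is transverse to $\xi$, the plane $\xi|_{T_\pm(K)}$ is transverse to $T(T_\pm(K))$, so projecting $v_\pm|_{T_\pm(K)}$ into the normal bundle of $T_\pm(K)$ produces a push-off $T_\pm(K)'$; since $v_\pm$ is a nowhere-zero section of $\xi$ over the rational Seifert surface $\Sigma_\pm$, Definition~\ref{defn:classicalInvts}(4) gives $\slk_\rat(T_\pm(K))=lk_\rat(T_\pm(K),T_\pm(K)')$.

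The heart of the argument is to evaluate $lk_\rat(T_\pm(K),T_\pm(K)')$ inside $C_\ve$, in which $K=\{y=z=0\}$, $T_\pm(K)=\{y=\pm\ve/2,\ z=0\}$, $\xi=\ker(dz-y\,dx)$, the contact framing of $K$ points along $\partial_y$, and $\xi|_{\{y=y_0,\,z=0\}}=\langle\partial_y,\ \partial_x+y_0\partial_z\rangle$. Since $\Sigma_\pm$ agrees with $\Sigma$ off $\nu(K)$ while $T_\pm(K)$, $T_\pm(K)'$, $K$, and the contact push-off $K'$ all lie in $\nu(K)$, both $lk_\rat(T_\pm(K),T_\pm(K)')=\frac1r\,\Sigma_\pm\cdot T_\pm(K)'$ and $\tb_\rat(K)=\frac1r\,\Sigma\cdot K'$ can be read off inside $C_\ve$. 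I would then transport framings across the flat annulus $\{(x,t,0)\}$ that $T_\pm(K)$ and $K$ cobound, so that $\slk_\rat(T_\pm(K))-\tb_\rat(K)$ becomes $\frac1r$ times the winding, measured in the normal bundle of $T_\pm(K)$, of the push-off direction of $T_\pm(K)'$ against $\partial_y$. In the plane field $\xi|_{T_\pm(K)}$ — where $v_\pm$ is constant with respect to $\tau$ — this relative winding has absolute value $r\,\abs{\rot_\rat(K)}$, because $\partial_y$ and $dK$ span $\xi|_K$ (hence are homotopic among its nowhere-zero sections) and the winding of $dK$ against $\tau$ along $\partial\Sigma$ is $r\,\rot_\rat(K)$ by Definition~\ref{defn:classicalInvts}(3). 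The sign of this contribution is opposite for $T_+(K)$ and $T_-(K)$: they approach $K$ from opposite normal directions, so passing from $\xi|_{T_\pm(K)}$ to the normal bundle reverses orientation in one case and preserves it in the other. With the conventions fixed appropriately this yields $\slk_\rat(T_\pm(K))=\tb_\rat(K)\mp\rot_\rat(K)$.

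The step I expect to be the main obstacle is exactly this sign bookkeeping. One must check that a nowhere-zero section $v_\pm$ of $\xi$ over $\Sigma_\pm$ extending $\tau$ exists (perhaps after adjusting $\Sigma_\pm$ or $\tau$); one must account for the difference between $\Sigma$ and $\Sigma_\pm$ inside $\nu(K)$ — the two cone regions — and for the $r$-to-one behavior of $j|_{\partial\Sigma_\pm}$ in the intersection number $\Sigma_\pm\cdot T_\pm(K)'$; and, above all, one must pin down the orientations of $\xi$, of $\Sigma$, and of the meridian disks of $\nu(K)$ tightly enough that the relative twisting is $\mp r\,\rot_\rat(K)$ rather than $\pm r\,\rot_\rat(K)$. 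Specializing to $r=1$ and $(M,\xi)=(S^3,\xi_{st})$, where the statement reduces to the textbook front-projection computation $\slk(T_\pm(K))=\tb(K)\mp\rot(K)$, provides a useful consistency check.
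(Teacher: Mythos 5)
The paper itself contains no proof of this lemma: it is quoted directly from Baker and Etnyre \cite{BE} (``Baker and Etnyre show the following''), so there is no internal argument to compare yours against. That said, your sketch follows what is essentially the standard, and the cited, route: work in the model neighborhood $C_\ve$, build a rational Seifert surface for $T_\pm(K)$ agreeing with $\Sigma$ outside $\nu(K)$, and express $\slk_\rat(T_\pm(K))-\tb_\rat(K)$ as $\tfrac1r$ times the relative winding, along the $r$-fold covered boundary, of the section $v$ defining the self-linking framing against the contact-framing direction $\partial_y$; identifying that winding with $r\,\rot_\rat(K)$ up to sign, via the observation that $dK$ and $\partial_y$ are homotopic nonvanishing sections of $\xi$ along $K$, is exactly the classical $S^3$ argument with the factor $r$ tracked through Definition \ref{defn:classicalInvts}. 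The steps you wave at are genuinely routine: $j^*\xi$ is a plane bundle over a compact surface with boundary, hence trivial, so the nonvanishing section $v_\pm$ extending your trivialization across the collar coming from the cone exists, and for links the cross-component terms cancel in the difference of the two sides.

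The one substantive incompleteness is the one you name yourself: the sign. Writing ``with the conventions fixed appropriately this yields $\mp$'' assumes the conclusion, and the $\mp$ is the entire content distinguishing $T_+(K)$ from $T_-(K)$. It can be pinned down concretely in your model: along $T_\pm(K)=\{y=\pm\ve/2,\ z=0\}$ one has $\xi=\operatorname{span}\bigl(\partial_y,\ \partial_x\pm\tfrac\ve2\partial_z\bigr)$, and projecting $\xi$ to the normal plane $\operatorname{span}(\partial_y,\partial_z)$ along the tangent direction $\partial_x$ is orientation-preserving for one push-off and orientation-reversing for the other, which is exactly the asymmetry you invoke; carrying that sign through the winding comparison, and checking the outcome against the classical front-projection identity in $(S^3,\xi_{st})$ as you propose, would complete the argument. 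So: right strategy, consistent with the source the paper cites, with the decisive sign verification still to be written out rather than asserted.
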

\begin{rem} We will denote by $T(K)$ whichever of the transverse push-offs of $K$ has $\slk_\rat(T(K))=\tb_\rat(K)+\abs{\rot_\rat(K)}$. If $\rot_\rat(K)=0$ then $T(K)$ is taken to be the positive transverse push-off.
\label{rem:T(K)}
\end{rem}

\section{Formulas for $\tb_\rat(K), \rot_\rat(K),$ and $\slk_\rat(K)$ from a grid projection in $L(p,q)$}
\label{sec:gridProjForms}

A method for computing the (rational) Thurston-Bennequin number of a Legendrian link in $L(p,q)$ via the Maslov index of a corresponding grid diagram was given in \cite{BG}. The complexity of such computations increases quickly as the grid number of the diagram increases. 

We recall that in $(S^3,\xi_{st})$ there are formulas for the classical invariants that can be computed from a front projection (see \cite{B}). The formulas we give here for computing the (rational) Thurston-Bennequin, rotation, and self-linking numbers in $L(p,q)$ are in the same spirit. In fact, they are derived from the former. 

\begin{defn}
Given a grid projection $P$ for an oriented link in $L(p,q)$ denote the writhe of the projection by $w(P)$ and the number of cusps of the projection by $c(P)$. Also, let $\mu(P)$ denote the algebraic intersection number of $\alpha_0$ with $P$ and $\lambda(P)$ the algebraic intersection number of $P$ with $\beta_0$. 
\label{defn:diagramInvts}
\end{defn}

Let $K$ be the link associated to a given grid projection $P$ on a grid diagram. We recall that for a given row (resp.\ column) in that diagram, $P$ contains one of the two choices of horizontal (resp.\ vertical) arcs. A projection $P'$ which is identical to $P$ except that it contains the other arc in this row (resp.\ column) corresponds to a link $K'$ that differs from $K$ by a Legendrian isotopy across a meridian of $V^\alpha$ (resp.\ $V^\beta$). We call this isotopy a \emph{disk slide}.

Recall that in $(S^3,\xi_{st})$ if $P$ is the front projection of a Legendrian link $K$ then $\tb(K)=w(P)-\frac12c(P)$. Moreover, we note that $(S^3,\xi_{st}) = (L(1,0),\xi_{UT})$, and if a grid projection $P$ in $L(1,0)$ is contained in a planar subset of $T$ then there is a slight perturbation of $P$ giving a front projection (see \cite{BG}).

\begin{prop} Let $K$ be an oriented Legendrian link in $(S^3,\xi_{st})=(L(1,0),\xi_{UT})$ and let $P$ be a grid projection of $K$. Let $l=\lambda(P)$ and $m=\mu(P)$. Then
			\[\emph{tb}(K)=w(P)-\frac12c(P)-ml.\]
\label{tbS3form}
\end{prop}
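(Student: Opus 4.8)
The plan is to reduce the statement to the known front-projection formula $\tb(K)=w(F)-\tfrac12 c(F)$ in $(S^3,\xi_{st})$ by exhibiting a genuine front projection $F$ of $K$ whose writhe and cusp count differ from those of the grid projection $P$ in a controlled way. The point is that a grid projection in $L(1,0)$ lives on the torus $T=-\Sigma$, not in the plane, so it is not immediately a front; to extract a front I would cut $T$ along $\alpha_0$ and $\beta_0$ to obtain the fundamental square, and then record what crossings and cusps are created or destroyed when the arcs of $P$ that wrapped around these two curves are laid flat in the plane.

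First I would set up the combinatorics of the grid projection on the torus: each horizontal arc runs from an $X$ to the $O$ in its row, pushed into $V^\alpha$, and each vertical arc runs from an $O$ to the $X$ in its column, pushed into $V^\beta$, with verticals crossing under horizontals. The quantity $m=\mu(P)$ is the signed count of intersections of $P$ with $\alpha_0$, i.e.\ the net number of horizontal strands crossing the chosen meridian, and $l=\lambda(P)$ is the signed count of intersections with $\beta_0$, i.e.\ the net number of vertical strands crossing $\beta_0$. When we cut the torus open along $\alpha_0$ and $\beta_0$ and planarize, an arc that crossed $\alpha_0$ must be re-routed "the long way" around, and each such re-routed horizontal strand now runs past each vertical strand that crossed $\beta_0$; tallying these new crossings with sign gives exactly a contribution of $-ml$ to the writhe. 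Using disk slides (the Legendrian isotopy across a meridional disk of $V^\alpha$ or $V^\beta$ described just before the proposition) I can arrange each row's horizontal arc and each column's vertical arc to sit in a standard position relative to $\alpha_0,\beta_0$ so that the bookkeeping is clean and no spurious cusps are introduced: the only cusps of $F$ are the cusps of $P$, so $c(F)=c(P)$.

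Then I would verify that no other discrepancy arises. Disk slides do not change $\tb(K)$ (they are Legendrian isotopies), and one checks they change $w(P)$ and the product $\mu(P)\lambda(P)$ by compensating amounts, so $w(P)-ml$ is a disk-slide invariant; this lets me assume $P$ is in the normalized position above. On that normalized $P$, planarizing produces a front $F$ with $c(F)=c(P)$ and $w(F)=w(P)-ml$, and the classical formula gives $\tb(K)=w(F)-\tfrac12 c(F)=w(P)-\tfrac12 c(P)-ml$, as desired.

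The main obstacle I anticipate is the sign analysis in the planarization step: proving that the new crossings created by re-routing the $m$ wrapped horizontal strands past the $l$ wrapped vertical strands contribute precisely $-ml$ (and not, say, $+ml$ or something depending on an auxiliary choice). This requires pinning down orientation conventions — the identification of $T$ with $-\Sigma$, the fact noted in the excerpt that $\{\alpha_0,\beta_0\}$-tangents are coherently oriented with the global frame $\{d/d\theta_1,d/d\theta_2\}$, and the under/over convention (verticals under horizontals) — and checking that the signs of all $ml$ new crossings agree. Handling the case where a single row or column strand wraps multiple times (so that $m$ or $l$ exceeds the grid number, or is negative) needs a little care but follows the same strand-counting argument linearly.
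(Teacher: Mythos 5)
Your overall plan---reduce to a planar grid projection, where a small perturbation gives a front and the classical formula $\tb(K)=w-\tfrac12c$ applies, and control what happens as the projection is unwound across $\alpha_0$ and $\beta_0$ by disk slides---is the same reduction the paper uses. The gap is in your bookkeeping for a disk slide. You assert that disk slides introduce ``no spurious cusps'' and that they change $w(P)$ and $\mu(P)\lambda(P)$ by compensating amounts, so that $w(P)-ml$ is a disk-slide invariant with $c(P)$ fixed. That is false in general: re-routing a horizontal (resp.\ vertical) arc the long way around its row (resp.\ column) flips the corner type at both of its endpoints, so the number of cusps generically changes by $\pm2$, and the writhe change is then not $\pm m$ but $\pm m\mp1$. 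Concretely, comparing $P'$ and $P$ across one slide along a curve $c$ parallel to $\alpha_0$, one finds $\pm\bigl(w(P)-w(P')+1\bigr)=\langle c,P'\rangle=m'$ with $c(P)=c(P')-2$ when the replaced arc has two cusps, $\pm\bigl(w(P)-w(P')-1\bigr)=m'$ with $c(P)=c(P')+2$ when it has none, and $\pm\bigl(w(P)-w(P')\bigr)=m'$ with $c(P)=c(P')$ when it has exactly one, while $l=l'\pm1$. The quantity that is actually invariant under a disk slide is the full combination $w(P)-\tfrac12c(P)-\mu(P)\lambda(P)$---i.e.\ the statement being proved---and establishing that invariance requires exactly the case analysis on the cusp structure of the slid arc that your argument omits. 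Your ``normalize, then planarize'' scheme cannot sidestep this: the normalization is itself a sequence of disk slides whose effect on $w-ml$ and on $c$ you have not controlled, and the final cut-open-and-re-route step is again a disk slide on each wrapped arc, so the claims $c(F)=c(P)$ and $w(F)=w(P)-ml$ are not something you can arrange in general; summing the per-slide changes without the cusp corrections does not give $-ml$.

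A smaller point: your identification of $\mu$ and $\lambda$ is reversed. Horizontal arcs are disjoint from the $\alpha$-curves and vertical arcs from the $\beta$-curves, so $\mu(P)=\langle\alpha_0,P\rangle$ counts vertical strands crossing $\alpha_0$ and $\lambda(P)$ counts horizontal strands crossing $\beta_0$; this feeds directly into the sign analysis you flagged as the main difficulty. The repair is to drop the claim that $w-ml$ and $c$ are separately preserved and instead prove, one disk slide at a time and in all three cusp cases, that $w(P)-\tfrac12c(P)-\mu(P)\lambda(P)$ is unchanged, starting the induction from a planar projection where $\mu=\lambda=0$ and the classical front formula applies.
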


\begin{proof}
If $P$ is a projection on a planar subset of $\Sigma$ then we can consider it as a front projection. In this case $l=m=0$ and the proposition follows immediately. 
	
Suppose $P'$ is any grid projection of $K$ for which the proposition holds. We prove that if $P$ differs from $P'$ by a disk slide (either a disk with boundary parallel to $\alpha_0$ or along one with boundary parallel to $\beta_0$) then the proposition holds for $P$ as well. Since every grid projection of $K$ is related to a planar grid projection by a sequence of disk slides, this will prove the proposition. Inherent in this proof is the fact that disk slides correspond to Legendrian isotopy \cite{BG}.
	
We denote the algebraic intersection of two oriented curves $\gamma,\delta$ on $\Sigma$ that meet transversely by $\inp{\gamma}{\delta}$. Note that for any circle $c$ on $\Sigma$, parallel to $\alpha_0$ and transverse to $P'$, we have $m':=\inp{\alpha_0}{P'}=\inp c{P'}$, since $c$ separates $\Sigma\setminus\alpha_0$.
		
\begin{figure}[ht]	
	\[\begin{tikzpicture}[>=stealth,scale=1.3]
		\draw[thick]
			(2,2) -- (2,1) -- node[below] {$a'$} (6,1) -- (6,0);
		\draw[thick,cm={1,0,0,1,(0,-2.5)}]
			(2,0)--(2,1) -- node[below]{$a'$} (6,1)--(6,2);
		\draw[thick,cm={1,0,0,1,(0,-4.5)}]
			(2,0) -- (2,1) -- node[below] {$a'$} (6,1) -- (6,0);
	\foreach \ty in {0,-2.5,-4.5}
		\draw[thick,->,cm={1,0,0,1,(0,\ty)}]
			(0,1.25) -- node[above] {$c$} (8,1.25);
	\foreach \ty in {0,-2.5,-4.5}
		\draw[dashed,thick,cm={1,0,0,1,(0,\ty)}]
			(0,1) -- node[below] {$a$} (2,1);
	\foreach \ty in {0,-2.5,-4.5}
		\draw[dashed,thick,cm={1,0,0,1,(0,\ty)}]
			(6,1) -- node[below] {$a$} (8,1);
		\draw
			(-0.5,1.7) node {{\bf \emph{Case 1}}}
			(-0.5,-0.8) node {{\bf \emph{Case 2}}}
			(-0.5,-2.8) node {{\bf \emph{Case 3}}}
			(1,0) node {$cusp$}
			(7,-0.15) node {$cusp$}
			(7,-4.65) node {$cusp$};
		\draw[->]
			(1.4,0) .. controls (2.3,0.2) and (2.2,0.3) .. (2,0.85);
	\foreach \ty in {0,-4.5}
		\draw[->,cm={1,0,0,1,(0,\ty)}]
			(7,0) .. controls (6.8, 0.4) and (6.7,0.6) .. (6.15,0.85);
	\end{tikzpicture}\]
\caption{How $\tb$ changes with a disk slide}
\label{fig:tbFormS3}
\end{figure}
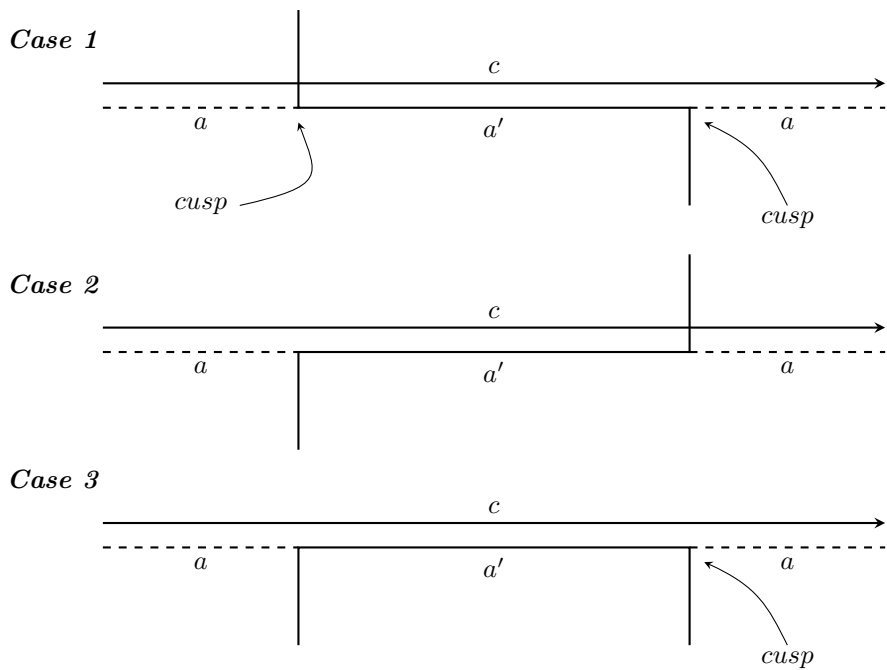

Suppose $P$ differs from $P'$ by a disk slide. Suppose further that the disk slide is along a disk $\Delta$ with boundary parallel to $\alpha_0$. There is some orientation of $\Delta$ such that $\bd\Delta=(-a')\cup a$, where $a'$ is an arc of $P'$ and $a$ is the arc that replaces it in the projection $P$ (see Figure \ref{fig:tbFormS3}). Let $c$ be a circle that is parallel and coherently oriented to $\alpha_0$ and a small distance away from $(-a')\cup a$. Then $c$ intersects $P'$ and $P$ tranversely. The writhe of $P$ differs from that of $P'$ only by the difference of writhe along double points of $a'$ and double points of $a$. Thus if $a'$ is an arc with two cusps as in case 1 of Figure \ref{fig:tbFormS3}, then
		\[\pm(w(P)-w(P')+1)=\inp c{P'}=m',\]
where the sign on the left depends on the orientation of $a$ with respect to $c$. If $a'$ has no cusps as in case 2 of Figure \ref{fig:tbFormS3} then
		\[\pm(w(P)-w(P')-1)=\inp c{P'}=m'.\]
Finally, if $a'$ is an arc with one cusp as in case 3 then
		\[\pm(w(P)-w(P'))=\inp c{P'}=m',\]
with the sign depending again on the orientation of $a$. Also, if $a'$ is as in case 1 then $c(P)=c(P')-2$, if we are in case 2 then $c(P)=c(P')+2$, and if we are in case 3, then $c(P)=c(P')$. 

Since $l=l'\pm1$ (where $l'=\lambda(P')$) and $m=m'$ we can now check that
	\[w(P)-\frac12c(P)-ml= w(P')-\frac12c(P')-m'l'=\tb(K)\]
for cases 1{--}3.

The argument is analogous if the disk slide is along a meridional disk.
\end{proof}
\begin{cor}
Let $K\subset (L(p,q),\xi_{UT})$ be an oriented Legendrian link with $P$ a grid projection for $K$. Let $\lambda=\lambda(P)$ and $\mu=\mu(P)$. Then
	\[\emph{tb}_\rat(K)=w(P)-\frac12c(P)-\frac{\mu\lambda}p.\]
\label{tbForm}
\end{cor}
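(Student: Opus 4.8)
The plan is to reduce Corollary~\ref{tbForm} to Proposition~\ref{tbS3form} by passing from $(L(p,q),\xi_{UT})$ to a $p$-fold cover, exploiting the fact that $\pi\colon(S^3,\xi_{st})\to(L(p,q),\xi_{UT})$ is a contact covering. First I would lift the Legendrian link $K$ and its grid projection $P$: the torus $\Sigma=-T$ pulls back under $\pi$ to the Heegaard torus $\wt\Sigma$ of $S^3$, and a fundamental domain for $T$ (with normalized coordinates $\theta_1\in[0,p]$, $\theta_2\in[0,1]$) lifts to a $p\times 1$ arrangement that is naturally a fundamental domain for the grid torus of a grid projection $\wt P$ of $\wt K=\pi^{-1}(K)\subset S^3$. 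The key bookkeeping facts I need are: (i) $w(\wt P)=p\cdot w(P)$ and $c(\wt P)=p\cdot c(P)$, since each crossing and each cusp of $P$ has exactly $p$ preimages (the markings lift $p$-to-one); (ii) $\mu(\wt P)=\mu(P)$ and $\lambda(\wt P)=p\cdot\lambda(P)$, because an $\alpha$-curve of the cover projects homeomorphically to $\alpha_0$ while a $\beta$-curve of the lens space lifts to $p$ disjoint meridians — equivalently, $\wt\Sigma\to\Sigma$ is the connected $p$-fold cyclic cover dual to the meridian class, so intersection numbers with the ``longitude'' direction are unchanged while those with the ``meridian'' direction are multiplied by $p$. (I should double-check the roles of $\mu$ and $\lambda$ against Definition~\ref{defn:diagramInvts}; the asymmetry in how $p$ enters is exactly what produces the factor $1/p$ in the final formula.)

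Next I would relate the Thurston--Bennequin numbers across the cover. The contact framing pulls back to the contact framing, so if $K'$ is the contact pushoff of $K$ then $\pi^{-1}(K')$ is the contact pushoff of $\wt K=\pi^{-1}(K)$ in $S^3$. Since $K$ has order $p$ in $H_1(L(p,q))$ (for a knot; for a link one works component by component, tracking the order of each component) and its preimage is a genuine knot in $S^3$, the definition of $\tb_\rat$ via $lk_\rat(K,K')=\tfrac1r\,\Sigma\cdot K'$ together with the standard covering-space fact that $\Sigma\cdot K'$ lifts to $\wt\Sigma\cdot\pi^{-1}(K')=\tb(\wt K)$ gives
\[
\tb_\rat(K)=\tfrac1p\,\tb(\wt K).
\]
More carefully: a rational Seifert surface $\Sigma$ for $K$ with $\partial\Sigma$ a $p$-fold cover of $K$ lifts to an honest Seifert surface $\wt\Sigma$ for $\wt K$ in $S^3$, and linking numbers multiply by the covering degree in the upstairs count while $lk_\rat$ divides by $r=p$, so the two effects produce the clean factor $1/p$.

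Finally I would assemble the pieces: apply Proposition~\ref{tbS3form} to $\wt K$ and $\wt P$ in $(S^3,\xi_{st})=(L(1,0),\xi_{UT})$, obtaining
\[
\tb(\wt K)=w(\wt P)-\tfrac12 c(\wt P)-\mu(\wt P)\lambda(\wt P)
= p\,w(P)-\tfrac p2 c(P)-\mu(P)\cdot p\,\lambda(P),
\]
and then divide by $p$ to get exactly $\tb_\rat(K)=w(P)-\tfrac12 c(P)-\tfrac{\mu\lambda}{p}$. I expect the main obstacle to be the careful justification of step two — verifying that the rational Seifert surface downstairs really does lift to an ordinary Seifert surface upstairs and that the rational linking number transforms by the stated factor — since this requires being precise about orientations, about the cone point of $\Sigma$ along $K$ lifting correctly, and about the link (as opposed to knot) case where different components may have different orders. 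An alternative, entirely downstairs argument mirroring the proof of Proposition~\ref{tbS3form} (induct on disk slides, with the trivial-link or grid-number-one base case computed directly from the Maslov-index formula of \cite{BG}) is also available and may in fact be cleaner to write; I would likely present whichever of the two makes the dependence on $\mu$ versus $\lambda$ most transparent.
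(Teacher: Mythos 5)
Your overall route---lift $P$ to a grid projection $\wt P$ of $\wt K=\pi^{-1}(K)$ in $S^3$, apply Proposition \ref{tbS3form} upstairs, and use $\tb_\rat(K)=\tb(\wt K)/p$---is exactly the paper's (the paper simply quotes the covering formula from \cite{BG}, so your plan to re-derive it by lifting rational Seifert surfaces is more work than is needed). However, your bookkeeping item (ii) contains a genuine error, and the error is already visible in your own final display: with $\lambda(\wt P)=p\,\lambda(P)$ you get $\tb(\wt K)=p\,w(P)-\tfrac p2 c(P)-p\,\mu\lambda$, and dividing by $p$ yields $w(P)-\tfrac12 c(P)-\mu\lambda$, \emph{not} the claimed $w(P)-\tfrac12 c(P)-\tfrac{\mu\lambda}p$. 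In fact both intersection numbers are unchanged under the lift: $\mu(\wt P)=\mu(P)$ and also $\lambda(\wt P)=\lambda(P)$. The preimage of $\beta_0$ under the $p$-fold cover $\wt\Sigma\to\Sigma$ is indeed $p$ disjoint circles, but each of them maps homeomorphically onto $\beta_0$, and $\lambda(\wt P)$ is computed against a \emph{single} one of these lifts (the curve $\beta_0$ of the lifted grid diagram), not against the whole preimage; a single lift meets $\wt P$ in points in sign-preserving bijection with $P\cap\beta_0$, so the count is unchanged. Multiplication by $p$ would occur only if you intersected $\wt P$ with all of $\pi^{-1}(\beta_0)$. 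There is no $\mu$-versus-$\lambda$ asymmetry to hunt for: the factor $1/p$ in the formula comes solely from $\tb_\rat(K)=\tb(\wt K)/p$ together with the fact that $w$ and $c$ scale by $p$ while the product $\mu\lambda$ does not.

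Two smaller points. First, the preimage of a knot in $L(p,q)$ need not be a knot in $S^3$: if the homology class of $K$ has order $r<p$, the lift $\wt K$ is a link, so the phrase ``its preimage is a genuine knot in $S^3$'' should be dropped; the formula $\tb_\rat(K)=\tb(\wt K)/p$ from \cite{BG} is what the paper invokes and applies to the lifted link. Second, once item (ii) is corrected to $\mu(\wt P)=\mu(P)$, $\lambda(\wt P)=\lambda(P)$, your assembly of Proposition \ref{tbS3form} with $w(\wt P)=p\,w(P)$ and $c(\wt P)=p\,c(P)$ gives exactly the paper's proof of Corollary \ref{tbForm}.
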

\begin{proof} 
Lift the projection $P$ to a grid projection $\wt P$ in $S^3$ by lifting the Heegaard torus of $L(p,q)$ to $\Sigma$. Then clearly $pw(P)=w(\wt P)$ and $pc(P)=c(\wt P)$. Moreover, $\mu(K)=\mu(\wt K)$ and $\lambda(K)=\lambda(\wt K)$ by definition.

It was shown in \cite{BG} that $\tb_\rat(K)=\frac{\tb(\wt K)}p$, and this completes the proof.
\end{proof}

\begin{cor}
Let $K\subset (L(p,q),\xi_{UT})$ be a Legendrian knot with its contact framing. If there is an integral surgery on $K$ that gives a homology sphere  $S$ then $\mu(K)^2\equiv\pm q'\mod p$, where $qq'\equiv1\mod p$. In particular, if there is a knot in $L(p,q)$ on which integer surgery yields $S^3$ then $\pm q$ is a quadratic residue mod $p$.
\label{lem:quadResThm}
\end{cor}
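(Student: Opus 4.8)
The plan is to combine the homological rigidity forced by a homology-sphere surgery with the grid-projection formula of Corollary \ref{tbForm} and the linking form of $L(p,q)$. First I would represent the knot by a Legendrian knot $K$ with a grid projection $P$ (every knot in $L(p,q)$ has a grid diagram), and read ``integral surgery with respect to the contact framing'' as surgery along a slope $\gamma$ on $\partial\nu(K)$ with $\gamma\cdot\mu=\pm1$, where $\mu$ is the meridian of $K$; equivalently $\gamma=K'+n\mu$ with $K'$ the contact pushoff and $n\in\ints$. Let $g$ be the standard generator of $H_1(L(p,q))\cong\ints/p$, so $[K]=\pm\mu(K)\,g$. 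Since $\gamma$ is homologous to $K$ inside $\nu(K)$, there is a surjection $H_1(S)\to H_1(L(p,q))/\gen{[K]}\cong\ints/\gcd(p,\mu(K))$, so $H_1(S)=0$ forces $\gcd(p,\mu(K))=1$. In particular $[K]$ has order $p$ and $\mu(K)$ is a unit mod $p$, so the asserted congruence even makes sense.

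Next I would compute $H_1\big(L(p,q)\setminus\nu(K)\big)$. The long exact sequence of the pair, together with excision, yields $0\to\ints\gen{\mu}\to H_1\big(L(p,q)\setminus\nu(K)\big)\to\ints/p\to0$, and because $\gcd(p,\mu(K))=1$ this extension is nonsplit, so $H_1\big(L(p,q)\setminus\nu(K)\big)\cong\ints$ with a generator $\eta$ satisfying $\mu=\pm p\,\eta$. Filling along $\gamma$ identifies $H_1(S)$ with $H_1\big(L(p,q)\setminus\nu(K)\big)/\gen{[\gamma]}$, so $S$ is a homology sphere exactly when $[\gamma]$ generates, i.e.\ $[\gamma]=\pm\eta$. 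Writing $[K']=c\,\eta$ with $c\in\ints$, and noting that $lk_\rat(K,-)$ is a homomorphism on $H_1\big(L(p,q)\setminus\nu(K)\big)$ with $lk_\rat(K,\mu)=\pm1$ (the meridian of $K$ is nullhomologous in $L(p,q)$ and bounds a disk meeting $K$ once), we get $lk_\rat(K,\eta)=\pm1/p$, hence $c=\pm p\,\tb_\rat(K)\in\ints$ by the covering identity $\tb_\rat(K)=\tb(\wt K)/p$ of \cite{BG}. From $[\gamma]=(c\pm np)\,\eta=\pm\eta$ it follows that $c\equiv\pm1\pmod p$.

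Now I would bring in the two formulas. By Corollary \ref{tbForm}, $p\,\tb_\rat(K)=p\,w(P)-\tfrac p2 c(P)-\mu(K)\lambda(P)\equiv-\mu(K)\lambda(P)\pmod p$ (the number of cusps of a knot projection is even), so $c\equiv\mp\mu(K)\lambda(P)\pmod p$ and therefore $\mu(K)\,\lambda(P)\equiv\pm1\pmod p$. On the other hand, since $K'$ is homologous to $K$, the number $\tb_\rat(K)=lk_\rat(K,K')$ reduces mod $\ints$ to the linking form value $\lambda_{L(p,q)}([K],[K])=\mu(K)^2\,\lambda_{L(p,q)}(g,g)$; with the standard value $\lambda_{L(p,q)}(g,g)\equiv\pm q'/p\pmod{\ints}$ this and Corollary \ref{tbForm} give $\lambda(P)\equiv\mp\mu(K)\,q'\pmod p$. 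Substituting into $\mu(K)\lambda(P)\equiv\pm1$ yields $\mu(K)^2q'\equiv\pm1\pmod p$, i.e.\ $\mu(K)^2\equiv\pm q\pmod p$ --- equivalently the stated $\mu(K)^2\equiv\pm q'\pmod p$, the two normalizations being interchangeable here because $L(p,q)\cong L(p,q')$ and $\pm q,\pm q'$ are simultaneously quadratic residues mod $p$. The last assertion is the special case $S=S^3$: any knot admitting an integral $S^3$-surgery, in any grid projection, satisfies this congruence, so $\pm q$ is a square mod $p$.

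The step I expect to be the main obstacle is the sign and orientation bookkeeping rather than any conceptual difficulty: verifying that the extension computing $H_1\big(L(p,q)\setminus\nu(K)\big)$ is nonsplit, fixing the generator $\eta$ with the correct multiple of $\mu$, and reconciling the sign conventions in Corollary \ref{tbForm} with the correct linking-form value of $L(p,q)$ (which of $q$ and $q'$, and with which sign). The substantive inputs are the grid-diagram formula for $\tb_\rat$ and the covering-space identity $\tb_\rat(K)=\tb(\wt K)/p$; once those are available the argument is genuinely short, as advertised.
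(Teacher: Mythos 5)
Your overall architecture matches the paper's up to the last step: the paper simply cites \cite{BG} for the fact that a homology-sphere surgery forces $p\cdot\tb_\rat(K)\equiv\pm1\pmod p$ (your $H_1$ computation of the exterior is a reasonable re-derivation of this, though note that $H_1\bigl(L(p,q)\setminus\nu(K)\bigr)\cong\ints$ is most cleanly deduced from the surgery hypothesis itself --- $H_1(S)=0$ forces $H_1$ of the exterior to be cyclic of rank one --- rather than from the asserted nonsplitness of the extension), and then, exactly as you do, Corollary \ref{tbForm} converts this congruence into $\mu\lambda\equiv\pm1\pmod p$.

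The genuine problem is your final step. The paper closes with an elementary observation that you replace by a linking-form argument: the grid projection is a closed curve on the Heegaard torus, on which $[\alpha_0]=(1,0)$ and $[\beta_0]=\pm(q,-p)$, so its two intersection numbers automatically satisfy $\lambda\equiv\pm\mu q\pmod p$; hence $\mu^2q\equiv\pm1$, i.e.\ $\mu^2\equiv\pm q'$. Your substitute uses the value $\lambda_{L(p,q)}(g,g)\equiv\pm q'/p$, but for the generator you actually need --- $g=[\text{core of }V^\alpha]$, the class with $[K]=\mu(K)\,g$, since $\mu(K)$ counts intersections with the meridian $\alpha_0$ of $V^\alpha$ --- the correct value is $\pm q/p$: write $p\cdot(0,1)=q(1,0)-(q,-p)$ on the Heegaard torus and cap with $q$ meridian disks of $V^\alpha$ and one of $V^\beta$; only the former meet the core, $q$ times. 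The value $\pm q'/p$ is the self-linking of the core of $V^\beta$. With your constant you derive $\mu(K)^2\equiv\pm q\pmod p$, which is \emph{not} equivalent to the stated $\mu(K)^2\equiv\pm q'\pmod p$ for the specific integer $\mu(K)$: for $p=7$, $q=2$, $q'=4$, $\mu=3$ one has $\mu^2\equiv2\equiv\pm q$ but $\mu^2\not\equiv\pm q'$. The appeal to $L(p,q)\cong L(p,q')$ does not repair this, because $\mu(K)$ is defined with respect to the fixed Heegaard/grid structure (that homeomorphism interchanges the two solid tori and so changes what $\mu$ means); it only rescues the weaker final assertion that $\pm q$ is a quadratic residue, since $q$ and $q'=q^{-1}$ are simultaneously squares mod $p$. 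So as written you prove the Fintushel--Stern statement but not the stated congruence for $\mu(K)$; replacing the linking-form constant by $\pm q/p$, or using the paper's closed-curve relation $\lambda\equiv\mu q\pmod p$, fixes the argument.
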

\begin{rem}
We note that the last statement of the corollary is a well-known result of Fintushel and Stern \cite{FS}.
\label{rem:quadResThm}
\end{rem}
\begin{proof}
	In \cite{BG} it is shown that $p\cdot\tb_\rat(K)\equiv\pm1\mod p$. With Corollary \ref{tbForm} this implies that $\mu\lambda\equiv\pm1\mod p$. Since the projection of $K$ is a closed curve, $\lambda\equiv \mu q\mod p$, so that $\mu^2q\equiv\pm1\mod p$.
\end{proof}

The following proposition shows how to compute $\rot_\rat(K)$ given a projection $P$ corresponding to a grid diagram for an oriented Legendrian link $K$ in $(L(p,q),\xi_{UT})$. Its proof is similar to the proof of Proposition \ref{tbS3form} and Corollary \ref{tbForm}.

\begin{prop} Let $\mu$ and $\lambda$ be as in Corollary \ref{tbForm}. Note that each cusp in a grid projection is comprised of a horizontal and a vertical arc. Define $c_u(P)$ to be the number of cusps whose horizontal arc is oriented against $\alpha_0$ and $c_d(P)$ to be the number of cusps with horizontal arc oriented in the direction of $\alpha_0$. Then
	\[\emph{rot}_\rat(K)=\frac12\left(c_d(P)-c_u(P)\right)-\frac{(\lambda-\mu)}p.\]
\label{rotForm}
\end{prop}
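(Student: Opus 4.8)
The plan is to mimic the strategy used for Proposition \ref{tbS3form} and Corollary \ref{tbForm}: first establish the formula in the classical case $(S^3,\xi_{st})=(L(1,0),\xi_{UT})$, then descend to $L(p,q)$ via the $p$-fold cover. In $(S^3,\xi_{st})$ the rotation number of a Legendrian knot from its front projection is the familiar $\rot(K)=\tfrac12(c_d-c_u)$, where $c_d$ counts down cusps and $c_u$ up cusps (with the appropriate sign convention for the orientation of the horizontal arc relative to $\alpha_0$). So the first step is to prove that for any grid projection $P$ of a Legendrian link $K\subset(S^3,\xi_{st})$ one has $\rot(K)=\tfrac12(c_d(P)-c_u(P))-(\lambda-\mu)$, where $\lambda=\lambda(P)$, $\mu=\mu(P)$.

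For that first step I would argue exactly as in Proposition \ref{tbS3form}: if $P$ lies in a planar subset of $\Sigma$ then it is (a perturbation of) a front projection, $\lambda=\mu=0$, and the formula reduces to the classical one. Then I would check that the quantity $\tfrac12(c_d(P)-c_u(P))-(\lambda(P)-\mu(P))$ is invariant under a disk slide, since every grid projection of $K$ is connected to a planar one by a sequence of disk slides (which are Legendrian isotopies). For a disk slide along a disk $\Delta$ with $\bd\Delta=(-a')\cup a$ and boundary parallel to $\alpha_0$: in Case 1 ($a'$ carries two cusps, which disappear) the change in $c_d-c_u$ is governed by the orientation of $a$ versus a coherently oriented parallel circle $c$, and $\inp{c}{P'}=\mu'$, exactly as in the $\tb$ argument; here $\mu$ is unchanged while $\lambda$ changes by $\pm1$, and one checks the two changes cancel. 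Cases 2 and 3 (no cusps; one cusp) are handled the same way, bookkeeping which of $c_d$, $c_u$ changes. The meridional case (disk parallel to $\beta_0$) is analogous but now it is $\mu$ that changes by $\pm1$ while $\lambda$ is fixed, so the combination $\lambda-\mu$ — rather than $\mu\lambda$ as for $\tb$ — is the right invariant; this sign asymmetry between the two families of disk slides is the one place the computation genuinely differs from the $\tb$ case and is the main thing to get right.

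For the descent step I would lift $P$ to a grid projection $\wt P$ in $S^3$ by lifting the Heegaard torus of $L(p,q)$ to $\Sigma$, as in the proof of Corollary \ref{tbForm}. Since the cover is $p$-fold we get $c_d(\wt P)=p\,c_d(P)$ and $c_u(\wt P)=p\,c_u(P)$, while $\mu(\wt K)=\mu(K)$ and $\lambda(\wt K)=\lambda(K)$ by definition. The relation $\rot_\rat(K)=\tfrac1p\rot(\wt K)$ follows from the same circle of ideas in \cite{BG} that gives $\tb_\rat(K)=\tfrac1p\tb(\wt K)$ (the winding number of $j^*dK$ in the trivialization pulls back multiplicatively, and the rational rotation number divides by the order $r$ of $K$, which matches the covering degree at the level of the cone construction). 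Dividing the $S^3$ formula for $\wt P$ by $p$ then yields $\rot_\rat(K)=\tfrac12(c_d(P)-c_u(P))-\tfrac{\lambda-\mu}p$, completing the proof.

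I expect the main obstacle to be the first step — the disk-slide invariance — and specifically keeping the sign conventions consistent: one must track whether the horizontal arc of each cusp is oriented with or against $\alpha_0$ under the identification $T\cong-\Sigma$, reconcile this with the orientation of the parallel circle $c$ used to compute $\inp{c}{P'}$, and verify in each of Cases 1–3 (and their meridional analogues) that the change in $\tfrac12(c_d-c_u)$ exactly compensates the change in $\lambda-\mu$. Everything else is a routine transcription of the $\tb$ argument.
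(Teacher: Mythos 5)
Your proposal is correct and is essentially the paper's own argument: the paper simply states that the proof is ``similar to the proof of Proposition \ref{tbS3form} and Corollary \ref{tbForm}'', i.e.\ reduce to a planar projection where the classical front-projection formula $\rot=\tfrac12(c_d-c_u)$ applies, check invariance of $\tfrac12(c_d-c_u)-(\lambda-\mu)$ under the two types of disk slides, and then lift to the $p$-fold cover where $c_d(\wt P)=p\,c_d(P)$, $c_u(\wt P)=p\,c_u(P)$, $\mu$ and $\lambda$ are unchanged, and the covering relation $\rot_\rat(K)=\tfrac1p\rot(\wt K)$ (the analogue of $\tb_\rat(K)=\tfrac1p\tb(\wt K)$ from \cite{BG}) gives the stated formula. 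Your sign bookkeeping is consistent with the paper (e.g.\ combining your formula with Corollary \ref{tbForm} and Lemma \ref{lem:slkTOtb-rot} reproduces Corollary \ref{slForm} exactly), so the outline matches the intended proof at the same level of detail as the paper itself.
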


\begin{cor} Let $T_+(K)$ (resp.\ $T_-(K)$) be the positive (resp.\ negative) transverse pushoff of the Legendrian link $K$ in $(L(p,q),\xi_{UT})$. Then
	\al{
	\emph{sl}_\rat(T_+(K))&=w(P)-c_d(P)-\frac{\mu\lambda+(\mu-\lambda)}p\quad\text{and}\\
	\emph{sl}_\rat(T_-(K))&=w(P)-c_u(P)-\frac{\mu\lambda+(\lambda-\mu)}p,
	}
where $P$ is the projection of $K$ as above.
\label{slForm}
\end{cor}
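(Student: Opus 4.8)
The plan is to derive Corollary~\ref{slForm} directly from Lemma~\ref{lem:slkTOtb-rot} together with the two preceding results, Corollary~\ref{tbForm} and Proposition~\ref{rotForm}, so that essentially no new geometric input is needed. First I would recall that Lemma~\ref{lem:slkTOtb-rot} gives $\slk_\rat(T_{\pm}(K)) = \tb_\rat(K)\mp\rot_\rat(K)$. For the positive push-off this reads $\slk_\rat(T_+(K)) = \tb_\rat(K)-\rot_\rat(K)$, and for the negative push-off $\slk_\rat(T_-(K)) = \tb_\rat(K)+\rot_\rat(K)$; so the entire proof is the algebraic substitution of the two formulas already in hand.

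Next I would substitute. From Corollary~\ref{tbForm}, $\tb_\rat(K)=w(P)-\tfrac12 c(P)-\tfrac{\mu\lambda}{p}$, and from Proposition~\ref{rotForm}, $\rot_\rat(K)=\tfrac12(c_d(P)-c_u(P))-\tfrac{\lambda-\mu}{p}$. Here I must also use the evident identity $c(P)=c_u(P)+c_d(P)$, since every cusp has its horizontal arc oriented either with or against $\alpha_0$. Then for the positive push-off,
\[
\slk_\rat(T_+(K)) = w(P)-\tfrac12\bigl(c_u(P)+c_d(P)\bigr)-\tfrac{\mu\lambda}{p} - \tfrac12\bigl(c_d(P)-c_u(P)\bigr)+\tfrac{\lambda-\mu}{p},
\]
and the $c_u$ terms cancel while the $c_d$ terms combine to $-c_d(P)$, and the fractional terms combine to $-\tfrac{\mu\lambda+(\mu-\lambda)}{p}$, giving exactly the claimed formula. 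The computation for $T_-(K)$ is identical except that the sign of $\rot_\rat(K)$ is reversed, so the $c_d$ terms cancel, the $c_u$ terms combine to $-c_u(P)$, and the fractional part becomes $-\tfrac{\mu\lambda+(\lambda-\mu)}{p}$.

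Since the statement is a pure consequence of results proved earlier in the paper, there is no real obstacle; the only point requiring a word of care is the orientation bookkeeping in $c(P)=c_u(P)+c_d(P)$ and making sure the signs in Lemma~\ref{lem:slkTOtb-rot} are matched to the sign conventions of Proposition~\ref{rotForm} (i.e.\ that the $\mp$ in the lemma is the one that produces the stated asymmetry between $\mu-\lambda$ and $\lambda-\mu$ in the two lines). I would state the proof in two or three lines, noting that it is immediate from Lemma~\ref{lem:slkTOtb-rot}, Corollary~\ref{tbForm}, and Proposition~\ref{rotForm}.
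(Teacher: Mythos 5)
Your proposal is correct and is exactly the paper's argument: the paper's proof is the one-line observation that the corollary follows from Corollary \ref{tbForm}, Proposition \ref{rotForm}, and Lemma \ref{lem:slkTOtb-rot}, and your substitution (using $c(P)=c_u(P)+c_d(P)$) carries out precisely that computation with the signs matching the stated formulas.
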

\begin{proof} This results from Corollary \ref{tbForm}, Proposition \ref{rotForm}, and Lemma \ref{lem:slkTOtb-rot}.
\end{proof}

\section{Bennequin-type bounds in $L(p,q)$}
\label{sec:BennBd}

In this section we prove the lens space analogue of the version of the FWM inequality observed by Fuchs and Tabachnikov \cite{FT}. To achieve this, we prove the analogue of a theorem of Lenny Ng (Theorem 1 in \cite{Ng}) for oriented links in the contact lens space $(L(p,q),\xi_{UT})$. While the proof follows the ideas of Ng in \cite{Ng}, we note that it is independent of the work of Rutherford \cite{Ruth} that plays a key role in the proof given in \cite{Ng}. It is an example of the power of the point of view of grid diagrams in understanding Legendrian links.

{\bf Theorem \ref{mainThm2}}.\quad
{\it Let $i$ be a $\rat$-valued invariant of oriented links in $L(p,q)$ such that
	\al{
		&i(L_+)+1\le\max(i(L_-)-1,i(L_0))\\
	and\\
		&i(L_-)-1\le\max(i(L_+)+1,i(L_0))
	}
where $L_+, L_-,$ and $L_0$ are oriented links that differ as in the skein relation. If $\emph{sl}_\rat(T(\tau))\le-i(\tau)$ for every trivial link $\tau$ in $L(p,q)$, then
		\[\overline{\emph{sl}}_\rat(L)\le-i(L)\]
	 for every link $L$ in $L(p,q)$. Here $\overline{\emph{sl}}_\rat$ is the maximum self-linking number among transverse links in $(L(p,q),\xi_{UT})$ that are isotopic to $L$.
}
\medskip 

\begin{proof}
	In the proof we will abuse notation, often using $P$ to refer to both a grid projection and the Legendrian link it specifies. Our strategy of proof is as follows. Assume our link is a positive transverse push-off $T_+(P)$. Define $\wt i(P)=i(P)+w(P)$, where $w(P)$ is the writhe of $P$. Then by the formula of Corollary \ref{slForm}, the inequality $\slk_\rat(T_+(P))\le-i(P)$ is equivalent to 
	\begin{equation}
	-\slk_\rat(T_+(P))-i(P)=c_d(P)+\frac{\mu\lambda+(\mu-\lambda)}p-\wt i(P)\ge0.
	\label{slIneq}
	\end{equation}
	
	By the results from \cite{C}, in particular Lemma \ref{lemRuth}, there is a minimal length sequence 
		\[P\overset{\alpha_1}{\to}P_1\overset{\alpha_2}{\to},\ldots,\overset{\alpha_n}{\to}P_n=P_\tau\]
	taking $P$ to a grid projection $P_\tau$ associated to a trivial link diagram, where each $\alpha_i$ is either a skein crossing change, Legendrian isotopy, or destabilization. We will show that if some $\alpha_i$ increases the left side of inequality (\ref{slIneq}) then it is a skein crossing change, and that in this case the resolution $P\rightsquigarrow P_0$ does not increase the left side of (\ref{slIneq}). If $P, P_0$ have underlying diagrams $D, D_0$, then $\psi(D)>\psi(D_0)$ (recall $\psi$ from the end of Section \ref{subsec:HOMpoly}). As $\psi$ is minimized on diagrams for trivial links, we may assume inductively that inequality (\ref{slIneq}) holds for $P_0$, since $\slk_\rat(T_+(\tau))\le\slk_\rat(T(\tau))\le-i(\tau)$ for any trivial link $\tau$. 
	
	If instead our transverse link is $T_-(P)$ for some $P$ then the same argument can be carried through, replacing inequality (\ref{slIneq}) with the corresponding formula. Since every transverse link is some transverse push-off $T_\pm(P)$ of a Legendrian link, this argument is sufficient to prove the theorem.
	
	 The expression on the left side of (\ref{slIneq}) is a Legendrian invariant, so if $\alpha_i$ is a Legendrian isotopy the expression is unchanged. In the case of a destabilization we make the following claim.
	
	{\bf Claim 1:} If $\alpha_i$ is a destabilization, then $-\slk_\rat(T_+(P_{i-1}))-i(P_{i-1})\ge -\slk_\rat(T_+(P_i))-i(P_i)$, so $\alpha_i$ does not increase the left side of (\ref{slIneq}). 

	\begin{proof}[Proof of Claim 1]
	
	First, $\mu$ and $\lambda$ are each an algebraic intersection of $P$ with some $\alpha$-curve or $\beta$-curve respectively. Therefore they are unchanged by a destabilization.

\begin{figure}[ht]
	\[\begin{tikzpicture}[>=stealth,scale=1.5]
		\draw[thick]
			(0.5,0.5) node {o}
			(0.5,1) node {x}
			(1,0.5) node {x}
			(4,0.5) node {x};
		\draw[blue,very thick,->]
			(1,1.8) -- (1,1.1)
			(1,0.9) -- (1,0.5) -- (0.5,0.5) -- (0.5,1) -- (1.8,1);
		\draw[blue,very thick,->]
			(4,1.8) -- (4,0.5) -- (5.8,0.5);
	
		\draw[gray,->,line width=10pt] (2.3,1) -- (3.2,1);
		\draw[line width=6pt,white] (2.2,1) -- (2.7,1);
		\draw
			(-0.2,0.2) node[below]{{\footnotesize upward cusp}}
			(-0.2,1.3) node[above] {{\footnotesize negative crossing}}
			(4.7,0.2) node[below] {{\footnotesize downward cusp}};
		\draw[->]
			(-0.2,0.2) -- (0.5,0.4);
		\draw[->]
			(-0.2,1.3) -- (0.95,1.05);
		\draw[->]
			(4.7,0.2) -- (4,0.4);
	\end{tikzpicture}\]
\caption{On an $X:NE$ destabilization $c_d(P)$ increases.}
\label{c_dInc}
\end{figure}
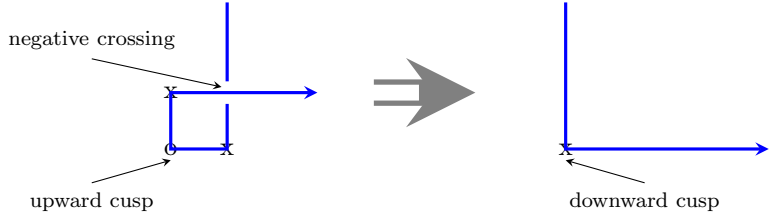

	It is possible that for some destabilization (that is not Legendrian isotopy), $c_d(P)$ changes. If $c_d(P)$ decreases, then $-\slk_\rat(T_+(P))$ decreases. The destabilization was an isotopy, so $i(P)$ remains unchanged and we see that the left side of (\ref{slIneq}) does not increase. If $c_d(P)$ increases the destabilization is of type X:NE or O:SW. Figure \ref{c_dInc} depicts when the destabilization is type X:NE. In such a case, $c_d(P)$ increases by 1, but $w(P)$ does also. Thus $\slk_\rat(T_+(P))$ does not change, and so $-\slk_\rat(T_+(P))-i(P)$ also does not change.
	\end{proof}
	
	We remark that Claim 1 also holds for the case of a negative push-off $T_-(P)$. In the proof one only need consider $c_u(P)$ instead of $c_d(P)$.
	
	Thus we only need concern ourselves with skein crossing changes. Suppose $\alpha_1$ is a skein crossing change. If the pair of columns involved in $\alpha_1$ are a positive (resp.\ negative) skein crossing in $P$, then use disk slides if necessary so that, at the skein crossing, $P$ appears as in Figure \ref{locSkeinTree} (resp.\ as in Figure \ref{locSkeinTree2}). Since disk slides are Legendrian isotopy, this does not alter the left side of inequality (\ref{slIneq}). 
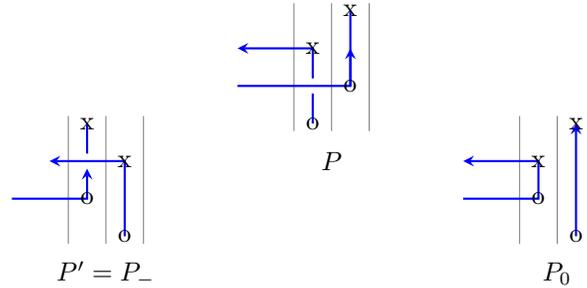
\begin{figure}[h]
	\[\begin{tikzpicture}[>=stealth]
\foreach \t in {0,3}
	\draw[gray, thin]
		(\t-0.25,-0.1+\t/2) -- (\t-0.25,1.6+\t/2)
		(\t+0.25,-0.1+\t/2) -- (\t+0.25,1.6+\t/2)
		(\t+0.75,-0.1+\t/2)-- (\t+0.75,1.6+\t/2);
	\draw[gray,thin]
		(6-0.25,-0.1) -- (6-0.25,1.6)
		(6.25,-0.1) -- (6.25,1.6)
		(6.75,-0.1) -- (6.75,1.6);

	\draw[blue,thick,cm={1,0,0,1,(3,1.5)}]
		(0,0) -- (0,0.4)
		(-1,0.5) -- (0.5,0.5) -- (0.5,1.5);
	\draw[blue,thick,->,cm={1,0,0,1,(3,1.5)}]
		(0,0.6) -- (0,1) -- (-1,1);
	\draw[blue,thick,->,cm={1,0,0,1,(3,1.5)}]
		(0.5,0.5) -- (0.5,1);
	\draw[cm={1,0,0,1,(3,1.5)}]
		(0,0) node {o}
		(0,1) node {x}
		(0.5,0.5) node {o}
		(0.5,1.5) node {x}
		(0.25,-0.5) node {$P$};

	\draw[blue,thick,->,cm={1,0,0,1,(-3,-1.5)}]
		(2,2) -- (3,2) -- (3,2.4);
	\draw[blue,thick,->,cm={1,0,0,1,(-3,-1.5)}]
		(3.5,3/2) -- (3.5,2.5) -- (2.5,2.5);
	\draw[blue,thick,cm={1,0,0,1,(-3,-1.5)}]
		(3,2.6) -- (3,3);
	\draw[cm={1,0,0,1,(-3,-1.5)}]
		(3,2) node {o}
		(3,3) node {x}
		(3.5,1.5) node {o}
		(3.5,2.5) node {x}
		(3.25,1) node {$P'=P_-$};

	\draw[blue,thick,->]
		(5,0.5) -- (6,0.5) -- (6,1) -- (5,1);
	\draw[blue,thick,->]
		(6.5,0) -- (6.5,1.5);
	\draw
		(6,0.5) node {o}
		(6,1) node {x}
		(6.5,0) node {o}
		(6.5,1.5) node {x}
		(6.25,-0.5) node {$P_0$};
	\end{tikzpicture}\]
\caption{A skein crossing change and resolution at a positive skein crossing.}
\label{locSkeinTree}
\end{figure}
	
	If $\alpha_1$ does not increase the left hand side of (\ref{slIneq}), then we are finished by induction. Otherwise the left side of (\ref{slIneq}) does increase in passing from the projection $P$ to $P_-$ (resp.\ $P_+$).
	
	\begin{lem} Let $P$ be a grid projection containing a skein crossing as in Figure \ref{locSkeinTree} or as in Figure \ref{locSkeinTree2}, and let $P'$ be the grid projection obtained by a skein crossing change at that crossing. Further, suppose $-\emph{sl}_\rat(T_+(P))-i(P)<-\emph{sl}_\rat(T_+(P'))-i(P')$. Then $-\emph{sl}_\rat(T_+(P))-i(P)\ge-\emph{sl}_\rat(T_+(P_0))-i(P_0)$. 
	\label{lem:passingThru}
	\end{lem}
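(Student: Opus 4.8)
The plan is to argue by direct case analysis on the sign of the skein crossing, tracking precisely how the quantity $-\slk_\rat(T_+(P))-i(P)$, which by (\ref{slIneq}) equals $c_d(P)+\tfrac{\mu\lambda+(\mu-\lambda)}{p}-\wt i(P)$, changes under a skein crossing change and under a resolution. First I would observe that a skein crossing change is a commutation of two adjacent interleaving columns, so $\mu$ is unchanged (it is an intersection number with an $\alpha$-curve), while $\lambda$ changes by $\pm p$ (the projection picks up or loses a full wrap around the $\beta_0$ direction, since the two markings swap which side of $\beta_0$ they lie on); meanwhile the writhe changes by $\pm 1$ from the single crossing being switched, and $c_d(P)$ is unaffected because the local pictures in Figures \ref{locSkeinTree} and \ref{locSkeinTree2} have no cusps among the four arcs involved. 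So passing $P\to P'$ changes the left side of (\ref{slIneq}) by $\pm 1 \mp \tfrac{1}{p}\cdot(\pm p) \mp (\text{change in }\wt i)$; combining $w$ into $\wt i = i + w$, and using the signed-identity $\lambda \equiv \mu q \pmod p$ together with the local orientation data, one finds the change in $-\slk_\rat(T_+)-i$ across $P\to P'$ is exactly $\mp 1$ minus (the change in $i$), matching the behaviour of the classical self-linking number under a crossing change.

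Next I would bring in the skein hypotheses on $i$. At a positive skein crossing, $P=L_+$, $P'=L_-$, and $P_0 = L_0$ in the notation of the skein relation; the hypothesis $i(L_+)+1\le\max(i(L_-)-1,\,i(L_0))$ is the relevant one. Translating via the computation above, the assumption $-\slk_\rat(T_+(P))-i(P) < -\slk_\rat(T_+(P'))-i(P')$ forces $i(L_-)-1$ \emph{not} to be the maximum — i.e. it forces $i(L_+)+1 \le i(L_0)$. Plugging $i(L_0) = i(P_0)$ back into the formula and comparing $c_d$, $\mu$, $\lambda$, and $w$ between $P$ and the resolution $P_0$ (the resolution changes $\lambda$ by $0$ or by $\pm p$ depending on the case, changes $w$ by a controlled amount, and again introduces no new cusps at that spot) then yields $-\slk_\rat(T_+(P))-i(P)\ge -\slk_\rat(T_+(P_0))-i(P_0)$, which is the conclusion. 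At a negative skein crossing one runs the mirror argument: now $P=L_-$, $P'=L_+$, $P_0=L_0$, and the hypothesis $i(L_-)-1\le\max(i(L_+)+1,\,i(L_0))$ together with the failure of the first alternative again pins $i(L_0)$ from above and delivers the same inequality. The case $T_-(P)$ is identical after replacing $c_d$ by $c_u$ and $(\mu-\lambda)$ by $(\lambda-\mu)$ throughout, exactly as in the remark following Claim 1.

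The main obstacle — and the step that needs to be done carefully rather than waved at — is the bookkeeping of the three diagram quantities $w$, $\lambda$, and $c_d$ (or $c_u$) across the two operations $P\rightsquigarrow P'$ and $P\rightsquigarrow P_0$, with the correct signs in each of the positive-crossing and negative-crossing cases. Concretely I would fix the normalized coordinates of Remark \ref{rpRem}, read off from Figures \ref{locSkeinTree} and \ref{locSkeinTree2} exactly which arcs are horizontal versus vertical and how they cross $\beta_0$, compute $\lambda(P)$, $\lambda(P')$, $\lambda(P_0)$ directly, and likewise compare writhes by counting the crossing at the swapped columns (the rest of the diagram is untouched). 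Once those three small local computations are pinned down, the rest is just substitution into (\ref{slIneq}) and invocation of the two displayed inequalities on $i$; no global argument is needed, since everything here is local to the pair of columns.
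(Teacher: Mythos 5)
Your logical skeleton is the same as the paper's (show that under the hypothesis the only remaining alternative in the max is $i(L_0)$, then transfer the inequality to $P_0$ via the formula (\ref{slIneq})), but the diagrammatic bookkeeping you base it on contains genuine errors, and they are not harmless. You assert that under the skein crossing change $\lambda$ jumps by $\pm p$ and the writhe changes by $\pm 1$. Both claims are false. A crossing change flips the sign of one crossing, so $w(P_\pm)=w(P_\mp)\pm 2$ and $w(P_\pm)=w(P_0)\pm 1$ (this is exactly what makes the hypothesis on $i$ translate into $\wt i(L_+)\le\max(\wt i(L_-),\wt i(L_0))$ and its mirror). More importantly, $\lambda$ does not change at all: the value of the algebraic intersection with a $\beta$-curve is independent of which $\beta$-curve is used; choosing one away from the two commuted columns (one exists, since a skein crossing needs at least two columns), the vertical arcs never meet $\beta$-curves and the horizontal arcs of $P$, $P'$, $P_0$ agree outside the two columns, so $\lambda(P)=\lambda(P')=\lambda(P_0)$, and likewise $\mu$ and $c_d$ (no cusps are created or destroyed) are common to all three projections. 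If one instead fed your claimed changes into (\ref{slIneq}), the term $\tfrac{\mu\lambda+(\mu-\lambda)}{p}$ would pick up a spurious contribution $\pm(\mu-1)$ that does not cancel against the writhe, and the final substitution would not produce the stated inequality except when $\mu$ happens to vanish; the identity $\lambda\equiv\mu q\pmod p$ and ``local orientation data'' cannot repair a $\mu$-dependent discrepancy.

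So the missing (and in fact simplifying) idea is precisely the invariance of $\mu$, $\lambda$, and $c_d$ across $P$, $P'$, and $P_0$: once that is in place, the only quantity in (\ref{slIneq}) that varies is $\wt i=i+w$, the hypothesis $-\slk_\rat(T_+(P))-i(P)<-\slk_\rat(T_+(P'))-i(P')$ becomes $\wt i(P)>\wt i(P')$, the skein hypotheses on $i$ rewritten for $\wt i$ force $\wt i(P)\le\wt i(P_0)$, and the conclusion follows immediately. Your case split by crossing sign, the reduction of the $T_-$ case by swapping $c_d$ for $c_u$, and the use of the strict inequality to eliminate one alternative of the max are all correct and match the paper; it is the local computation you flagged as ``the step that needs to be done carefully'' that, as written, comes out wrong and would derail the proof.
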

	As we remarked before, there is a complexity $\psi$ which is minimized by diagrams for trivial links, such that if $D_0$ is the grid diagram for $P_0$ and $D$ the diagram for $P$, then $\psi(D_0)<\psi(D)$. Thus we may assume that $P_0$ satisfies (\ref{slIneq}), so by Lemma \ref{lem:passingThru}, $P$ does also.
\end{proof}
	
\begin{figure}[ht]
	\[\begin{tikzpicture}[>=stealth]
\foreach \t in {0,3}
	\draw[gray, thin]
		(\t-0.25,-0.1+\t/2) -- (\t-0.25,1.6+\t/2)
		(\t+0.25,-0.1+\t/2) -- (\t+0.25,1.6+\t/2)
		(\t+0.75,-0.1+\t/2)-- (\t+0.75,1.6+\t/2);
	\draw[gray,thin]
		(6-0.25,-0.1) -- (6-0.25,1.6)
		(6.25,-0.1) -- (6.25,1.6)
		(6.75,-0.1) -- (6.75,1.6);

	\draw[blue,thick]
		(0,0) -- (0,0.4)
		(-1,0.5) -- (0.5,0.5) -- (0.5,1.5);
	\draw[blue,thick,->]
		(0,0.6) -- (0,1) -- (-1,1);
	\draw[blue,thick,->]
		(0.5,0.5) -- (0.5,1);
	\draw
		(0,0) node {o}
		(0,1) node {x}
		(0.5,0.5) node {o}
		(0.5,1.5) node {x}
		(0.25,-0.5) node {$P'=P_+$};

	\draw[blue,thick,->]
		(2,2) -- (3,2) -- (3,2.4);
	\draw[blue,thick,->]
		(3.5,3/2) -- (3.5,2.5) -- (2.5,2.5);
	\draw[blue,thick]
		(3,2.6) -- (3,3);
	\draw
		(3,2) node {o}
		(3,3) node {x}
		(3.5,1.5) node {o}
		(3.5,2.5) node {x}
		(3.25,1) node {$P$};

	\draw[blue,thick,->]
		(5,0.5) -- (6.5,0.5) -- (6.5,1) -- (5,1);
	\draw[blue,thick]
		(6,0) -- (6,0.4) (6,0.6)--(6,0.9) (6,1.1) -- (6,1.5);
	\draw[blue,thick,->]
		(6,1.1) -- (6,1.3);
	\draw
		(6.5,0.5) node {o}
		(6.5,1) node {x}
		(6,0) node {o}
		(6,1.5) node {x}
		(6.25,-0.5) node {$P_0$};
	\end{tikzpicture}\]
\caption{A skein crossing change and resolution at a negative skein crossing}
\label{locSkeinTree2}
\end{figure}
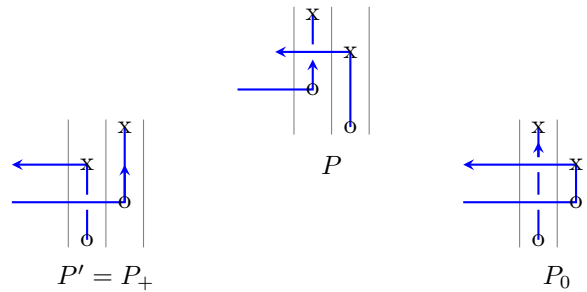
	
	\begin{proof}[Proof of Lemma \ref{lem:passingThru}]
	Firstly, $\lambda$ is not changed in passing from $P$ to either $P'$ or $P_0$. The columns depicted in Figure \ref{locSkeinTree} (resp.\ Figure \ref{locSkeinTree2}) are adjacent, so there is a $\beta$-curve, say $\beta_i$, between the two columns. The intersection of the projection with some $\beta$-curve determines $\lambda$, but the value of $\lambda$ is independent of the choice of $\beta$-curve for this intersection. Choose any $\beta$-curve other than $\beta_i$ (at least one exists, because the existence of a skein crossing implies at least two columns in the grid diagram). Since the horizontal arcs of the projections $P$, $P'$, and $P_0$ are identical outside of the two columns shown, $\lambda(P)=\lambda(P')=\lambda(P_0)$.
	
	Also, consider the pair of vertical arcs depicted in each projection $P$, $P'$, and $P_0$. The sum of the lengths of these two arcs is the same in all three projections. As $P$, $P'$ and $P_0$ are identical elsewhere, $\mu(P)=\mu(P')=\mu(P_0)$.
	
	Note that the number and nature of cusps in $P$ are the same as those in $P'$ and $P_0$, so $c_d(P)=c_d(P')=c_d(P_0)$. Therefore, the equality in (\ref{slIneq}) and the supposition that $-\slk_\rat(T_+(P))-i(P)<-\slk_\rat(T_+(P'))-i(P')$ imply that $\wt i(P)>\wt i(P')$. 
	
	However, by our assumption on the invariant $i$, we see that
	\al{
	&\wt i(L_+)\le\max\left(\wt i(L_-),\wt i(L_0)\right)\\
	\text{and}\\
	&\wt i(L_-)\le\max\left(\wt i(L_+),\wt i(L_0)\right).
	}
	
	Since $P'=P_-$ or $P'=P_+$ (depending on the sign of the skein crossing in question), $\wt i(P)>\wt i(P')$ implies that $\wt i(P)\le\wt i(P_0)$. But that implies that $-\slk_\rat(T_+(P))-i(P)\ge-\slk_\rat(T_+(P_0))-i(P_0)$, finishing the proof.
\end{proof}

Theorem \ref{mainThm2} has the following applications.

{\bf Corollary \ref{BennBd}}.\quad
{\it Let $L$ be an oriented link with some transverse representative $L_t$ in $(L(p,q),\xi_{UT})$. Let $J_{p,q}$ denote the HOMFLY polynomial invariant in $L(p,q)$, normalized as in Section \ref{subsec:HOMpoly}, and set $e(L)$ to be the minimum degree in $a$ of $J_{p,q}(L)$. Then
	\[\slk_\rat(L_t)\le\frac{e(L)-1}p.\]
}
\medskip
\begin{proof}
The defining skein relation of $J_{p,q}$ says that
	\[e(L_+)\ge\min\left(e(L_-)+2p,e(L_0)+p\right),\]
implying
\[-e(L_+)+1\le\max\left(-e(L_-)+1-2p,-e(L_0)+1-p\right).\]
Dividing by $p$ and then adding 1 we have
\[\frac{-e(L_+)+1}p+1\le\max\left(\frac{-e(L_-)+1}p-1,\frac{-e(L_0)+1}p\right).\]

A similar computation shows
	\[\frac{-e(L_-)+1}p-1\le\max\left(\frac{-e(L_+)+1}p+1,\frac{-e(L_0)+1}p\right).\]

Letting $i(L)=\frac{-e(L)+1}p$, we see that $i$ satisfies the first hypothesis of Theorem \ref{mainThm2}. Moreover, by our choice of normalization $e(\tau)=p\cdot\slk_\rat(T(\tau))+1$ for any trivial link $\tau$ and so $-i(\tau)=\slk_\rat(T(\tau))$. Since all conditions of Theorem \ref{mainThm2} are met, we are done.
\end{proof}

{\bf Corollary \ref{TrivmaxSL}}.\quad
{\it If $\tau$ is a trivial link in $L(p,q)$, then $T(\tau)$ has maximal self-linking number among all transverse representatives of $\tau$. If $\tau$ is a trivial knot, then the Legendrian knot associated to its grid number one diagram has maximal Thurston-Bennequin number.
}
\medskip

\begin{proof} Let $\tau_t$ be some transverse representative of $\tau$. Corollary \ref{BennBd} says that $\slk_\rat(\tau_t)\le\frac{e(\tau)-1}p=\slk_\rat(T(\tau))$.

Now consider a Legendrian knot $K$ that is associated to a grid number one diagram and let $K'$ be another Legendrian knot with the same knot type as $K$. Suppose that $\tb_\rat(K)<\tb_\rat(K')$. In \cite{BE} it is shown that $K$ and $K'$ are Legendrian isotopic after each has been positively and negatively stabilized some number of times, and therefore $\tb_\rat(K)-\tb_\rat(K')$ is an integer. So we have $\tb_\rat(K)+1\le\tb_\rat(K')$.

By Corollary \ref{rotForm}, if $P$ is a grid projection for a grid diagram of $K$, then 
\[\rot_\rat(K)=\frac12(c_d(P)-c_u(P))-\frac{\lambda-\mu}p.\]
Since we are in the grid number one case, $P$ has one vertical and one horizontal arc. Also we can choose $P$ so that $0<\mu(P)<p$ and $0<\lambda(P)<p$. This choice of $P$ has no cusps at all, implying that $\abs{\rot_\rat(K)}<1$.

By Corollary \ref{lem:slkTOtb-rot} the transverse pushoff $T(K)$ of $K$ has self-linking $\slk_\rat(T(K))=\tb_\rat(K)+\abs{\rot_\rat(K)}$. So since $\abs{\rot_\rat(K)}<1$, it must be that $\slk_\rat(T(K))<\tb_\rat(K')$. This contradicts the fact that $\slk_\rat(T(K))$ is maximal, since one of the positive or negative transverse pushoffs of $K'$ has self-linking number at least as large as $\tb_\rat(K')$.
\end{proof}

\section{Computations}
\label{sec:comput}

Consider the family of links $\set{L_n}_{n\ge0}$ where $L_n$ is the link associated to the grid diagram in Figure \ref{fig:L_n} with grid number $n+2$. The link $L_n$ is a knot if $n$ is odd and a 2-component link if $n$ is even. The first two columns of this grid diagram make a negative skein crossing. It is not difficult to see that for $n\ge2$, commutation of these columns of $L_n$ gives a link isotopic to $L_{n-2}$ and the resolution of the same columns gives a link isotopic to $L_{n-1}$. Therefore
	\[J_{5,1}(L_n) = a^{-10}J_{5,1}(L_{n-2})-a^{-5}zJ_{5,1}(L_{n-1}).\]

\begin{figure}[ht]
	\[\begin{tikzpicture}[scale=0.09]
	\draw[step=28,thick]
			(0,0) grid (140,12.5);
	\draw[step=28,thick,cm={1,0,0,1,(0,4)}]
			(0,11.5) grid (140,28);
	\draw[gray,thin,step=4]
			(0,0) grid (140,12.5)
			(0,15.5) grid (140,32);	
	\foreach \l in {0,1,2,3,4}
	\draw 
		(28*\l,0) node [below] {\l};
	\draw	
		(140,0) node [below] {0};	
	\foreach \l in {0,1,2,3,4}
	\draw
		(28*\l+28,32) node[above] {\l};
	\draw
		(0,32) node[above] {4};


	\draw[blue,thick]
		(22,32) -- (22,30) -- (114,30) -- (114,27) (114,25) -- (114,22) -- (121,22)
		(26,32) -- (26,31) (26,29) -- (26,26) -- (118,26) -- (118,23) (118,21) -- (118,18) -- (121,18)
		(127,10) -- (130,10) -- (130,7) (130,5) -- (130,2) -- (138,2) -- (138,0)
		(127,6) -- (134,6) -- (134,3) (134,1) -- (134,0);

	\foreach \x in {(114,30),(118,26),(130,10), (134,6), (138,2)}
		\draw 	\x node {x};
	\foreach \o in {(22,30),(26,26),(114,22),(118,18),(130,2)}
		\draw	\o node {o};
	
	\draw
		(122.5,32) node[above] {$\overbrace{\qquad\qquad}$}
		(122.5,35) node[above] {\small $n$ columns};
	\fill[white]
		(121,-2) -- (121,33) -- (127,33) -- (127,-2) -- cycle;
	\draw
		(124,14) node {$\ddots$};
	\filldraw
		(0,0) circle (16pt)
		(28,32) circle (16pt);
			
	\end{tikzpicture}\]
	\caption{Grid diagram associated to the link $L_n$.}
	\label{fig:L_n}
\end{figure}
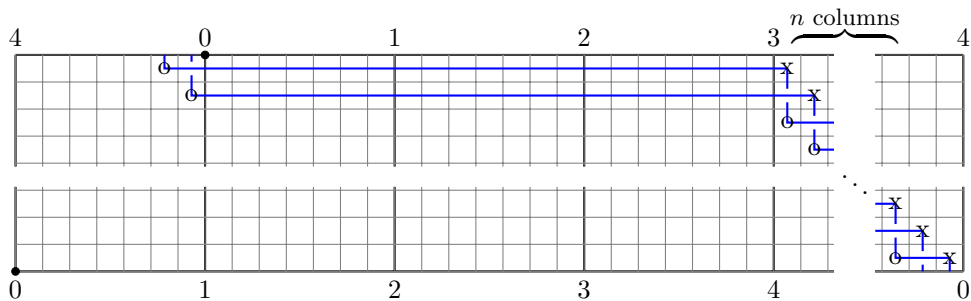

In \cite{C} we computed $J_{5,1}(L_0)=a^{-3}$ and $J_{5,1}(L_1)=a^{-8}(1-z)$. Define $f_n$ recursively: let
	\al{
		f_0= 1,\qquad	&\qquad f_1=1-z,
	}
	and define $f_n=f_{n-2}-zf_{n-1}$ for $n\ge2$. Then the skein relation above implies that $J_{5,1}(L_n) = a^{-5n-3}f_n$. Note that the recursive definition of $f_n$ implies that it is not zero for any $n$.

The grid diagram of Figure \ref{fig:L_n} determines a Legendrian link of $(L(5,1),\xi_{UT})$. Let $T(L_n)=T_+(L_n)$ denote positive transverse pushoff of $L_n$. By Corollary \ref{BennBd}
	\[\slk_\rat(T(L_n))\le\frac{e(L_n)-1}5 = -n-\frac45.\]
If we choose $P_n$ to be a grid projection of $L_n$, then by Corollary \ref{slForm}
	\[\slk_\rat(T(L_n)) = w(P_n)-c_d(P_n)-\frac{\mu\lambda+(\mu-\lambda)}5.\]
Let $P_n$ be the grid projection depicted in Figure \ref{fig:L_n}. Then $w(P_n)=-n-2$ and $c_d(P_n)=0$. Moreover, $\mu(P_n)=2$ and $\lambda(P_n)=-8$. Therefore,
	\[\slk_\rat(T(L_n)) = -n-2+\frac{6}5 = -n-\frac45,\]
showing that each $T(L_n)$ maximizes its self-linking number. We remark that 
	\[\tb_\rat(L_n)=-2n-\frac45\quad\text{ and }\quad\rot_\rat(L_n)=-n.\]

\bibliography{FWMInequalrefs}

\providecommand{\bysame}{\leavevmode\hbox to3em{\hrulefill}\thinspace}
\providecommand{\MR}{\relax\ifhmode\unskip\space\fi MR }
\providecommand{\MRhref}[2]{%
  \href{http://www.ams.org/mathscinet-getitem?mr=#1}{#2}
}
\providecommand{\href}[2]{#2}
\begin{thebibliography}{10}

\bibitem{BGH}
K.~L. Baker, J.~E. Grigsby, and M.~Hedden, \emph{Grid diagrams for lens spaces
  and combinatorial knot floer homology}, International Mathematics Research
  Notices \textbf{2008} (2008), 39 pages.

\bibitem{BE}
K.L. Baker and J.B. Etnyre, \emph{Rational linking and contact geometry}, To
  appear in Perspectives in Analysis, Geometry, and Topology (2009).

\bibitem{BG}
K.L. Baker and J.E. Grigsby, \emph{Grid diagrams and {L}egendrian lens space
  links}, J. Symp. Geom. \textbf{7} (2009), no.~4.

\bibitem{Bnn}
D.~Bennequin, \emph{Entrelacement et equations de {P}faff}, Asterisque
  \textbf{107--108} (1983), 83--161.

\bibitem{C}
C.~R. Cornwell, \emph{A polynomial invariant for links in lens spaces}, arXiv:
  math.GT/1002.1543v2.

\bibitem{E}
Y.~Eliashberg, \emph{Legendrian and transversal knots in tight contact
  3-manifolds}, Topological Methods in Modern Mathematics, 1993, pp.~171--193.

\bibitem{B}
J.~B. Etnyre, \emph{Legendrian and transversal knots}, Handbook of knot theory,
  Elsevier B. V., 2005, pp.~105--185.

\bibitem{FS}
R.~Fintushel and R.~J. Stern, \emph{Constructing lens spaces by surgery on
  knots}, Math. Z. \textbf{175} (1980), 33--51.

\bibitem{FW}
J.~Franks and R.~Williams, \emph{Braids and the {J}ones polynomial}, Trans. AMS
  \textbf{303} (1987), 97--108.

\bibitem{FT}
D.~Fuchs and S.~Tabachnikov, \emph{Invariants of {L}egendrian and transverse
  knots in the standard contact space}, Topology \textbf{36} (1997), no.~5,
  1025--1053.

\bibitem{Ge}
H.~Geiges, \emph{Contact geometry}, Handbook of differential geometry. {V}ol.
  {II}, Elsevier/North Holland, Amsterdam, 2006, pp.~315 -- 382.

\bibitem{H}
M.~Hedden, \emph{An {O}zsva\'th-{S}zabo\' {F}loer {H}omology invariant of knots
  in a contact manifold}, math.GT/0708.0448v2.

\bibitem{KL}
E.~Kalfagianni and X.-S. Lin, \emph{The {HOMFLY} polynomial for links in
  rational homology 3-spheres}, Topology \textbf{38} (1999), no.~1, 95--115.

\bibitem{LM}
P.~Lisca and G.~Matic, \emph{Stein 4-manifolds with boundary and contact
  structures. \emph{from} symplectic, contact, and low-dimensional topology},
  Topol. Appl. \textbf{88} (1998), no.~1-2, 55--66.

\bibitem{M}
H.R. Morton, \emph{Seifert circles and knot polynomials}, Math. Proc. Camb.
  Phil. Soc. \textbf{99} (1986), 107--109.

\bibitem{MR}
T.~Mrowka and Y.~Rollin, \emph{Legendrian knots and monopoles}, Algebraic and
  Geometric Topology \textbf{6} (2006), 1--69.

\bibitem{Ng1}
L.~Ng, \emph{A {L}egendrian {T}hurston--{B}ennequin bound from {K}hovanov
  homology}, Algebr. Geom. Topol. \textbf{5} (2005), 1637--1653,
  math.GT/0508649.

\bibitem{Ng}
\bysame, \emph{A skein approach to {B}ennequin type inequalities}, Int. Math.
  Res. Not. (2008), 18 pages, math.GT/0709.2141.

\bibitem{NT}
L.~Ng and D.~Thurston, \emph{Grid diagrams, braids, and contact geometry},
  2008, pp.~120--136.

\bibitem{OS}
P.~Ozsv\'ath and Z.~Szab\'o, \emph{Heegaard {F}loer homology and contact
  structures}, Duke Math. J. \textbf{129} (2005), no.~1, 39--61.

\bibitem{OST}
P.~Ozsvath, Z.~Szabo, and D.~Thurston, \emph{Legendrian knots, transverse knots
  and combinatorial {F}loer homology}, Geom. Topol. \textbf{12} (2008),
  941--980.

\bibitem{Pla1}
O.~Plamenevskaya, \emph{Bounds for the {T}hurston--{B}ennequin number from
  {F}loer homology}, Algebr. Geom. Topol. \textbf{4} (2004), 399--406,
  math.GT/0311090.

\bibitem{Pla2}
\bysame, \emph{Transverse knots and {K}hovanov homology}, Math. Res. Lett.
  \textbf{13} (2006), no.~4, 571--586, math.GT/0412184.

\bibitem{Rud1}
L.~Rudolph, \emph{A congruence between link polynomials}, Math. Proc. Cambridge
  Philos. Soc. \textbf{107} (1990), 319--327.

\bibitem{Rud2}
\bysame, \emph{Quasipositivity as an obstruction to sliceness}, Bull. Amer.
  Math. Soc. (N.S.) \textbf{29} (1993), 51--59.

\bibitem{Rud3}
\bysame, \emph{An obstruction to sliceness via contact geometry and
  ``classical'' gauge theory}, Invent. Math. \textbf{119} (1995), 155--163.

\bibitem{Ruth}
D.~Rutherford, \emph{The {B}ennequin number, {K}auffman polynomial, and ruling
  invariants of a {L}egendrian link: the {F}uchs conjecture and beyond}, Int.
  Math. Res. Not. (2006).

\bibitem{Shum}
A.~Shumakovitch, \emph{Rasmussen invariant, slice-{B}ennequin inequality, and
  sliceness of knots}, JKTR \textbf{16} (2007), no.~10, 1403--1412.

\bibitem{Wu2}
H.~Wu, \emph{On the slicing genus of {L}egendrian knots}, arXiv:
  math.GT/0505279v2.

\bibitem{Wu}
\bysame, \emph{Braids, transversal links and the {K}hovanov--{R}ozansky
  {T}heory}, Trans. Amer. Math. Soc. \textbf{360} (2008), 3365--3389.

\end{thebibliography}
\bibliographystyle{amsplain}
	
\end{document}